\definecolor{vertfonce}{rgb}{0.20, 0.46, 0.25}
\definecolor{rougefonce}{rgb}{0.64, 0.09, 0.20}
\definecolor{darkblue}{rgb}{0.1,0.1,0.7}
\newcommand{\Hm}[1]{\leavevmode{\marginpar{\tiny%
			$\hbox to 0mm{\hspace*{-0.5mm}$\leftarrow$\hss}%
			\vcenter{\vrule depth 0.1mm height 0.1mm width \the\marginparwidth}%
			\hbox to
			0mm{\hss$\rightarrow$\hspace*{-0.5mm}}$\\
			\relax\raggedright #1}}}
\title[On shape optimization with large magnetic fields in two dimensions]{On shape optimization with large magnetic fields\\ in two dimensions}
\author[V.~Lotoreichik]{Vladimir Lotoreichik}
\address[V.~Lotoreichik]{Department of Theoretical Physics,
	Nuclear Physics Institute, Czech Academy of Sciences, 25068, \v{R}e\v{z}, Czech Republic}
\email{lotoreichik@ujf.cas.cz}
\author[L. Morin]{Léo Morin}
\address[L.~Morin]{Department of Mathematics, University of Copenhagen, Universitets\-parken 5, DK-2100 Copenhagen \O, Denmark}
\email{lpdm@math.ku.dk }
\newcommand\nb{\nabla}
\newcommand{\beq}{\begin{equation} \begin{split}}
\newcommand{\eeq}{\end{split} \end{equation}}
\newcommand\Omg{\Omega}
\def\section{\@startsection{section}{1}\z@{.9\linespacing\@plus\linespacing}%
	{.7\linespacing} {\fontsize{13}{14}\selectfont\bfseries\centering}}
\def\paragraph{\@startsection{paragraph}{4}%
	\z@{0.3em}{-.5em}%
	{$\bullet$ \ \normalfont\itshape}}
\renewcommand\and{\qquad\text{and}\qquad}
\newcommand\sm{\setminus}
\newcommand{\comm}[1]{}
\def\bm1{\mathbbm{1}}
\def\p{\partial}
\def\arr{\rightarrow}
\def\lm{\lambda}
\def\p{\partial}
\def\dd{{\,\mathrm{d}}}
\newcounter{counter_a}
\newcommand{\ie}{{\it i.e.}\,}
\newcommand{\cf}{{\it cf.}\,}
\numberwithin{figure}{section}
\numberwithin{equation}{section}
\theoremstyle{plain}
\newtheorem*{thm*}{Theorem}
\newtheorem{thm}{Theorem}[section]
\theoremstyle{remark}
\newtheorem{remark}[thm]{Remark}
\theoremstyle{plain}
\newcommand{\beu}{\begin{equation*}}
\newcommand{\eeu}{\end{equation*}}
\newcommand{\besu}{\begin{equation*}
\begin{aligned}}
\newcommand{\eesu}{\end{aligned}
\end{equation*}}
\newcommand{\bes}{\begin{equation}
\begin{aligned}}
\newcommand{\ees}{\end{aligned}
\end{equation}}
\newcommand\cD{\mathcal D}
\newcommand\cH{\mathcal H}
\newcommand\cL{\mathcal L}
\newcommand\ov{\overline}
\newcommand\void[1]{}
\def\ov{\overline}
   \def\sH{{\mathfrak H}}
      \def\dC{{\mathbb C}}
\def\dD{{\mathbb D}}
   \def\dN{{\mathbb N}}   
      \def\dR{{\mathbb R}}
\def\sfM{{\mathsf M}}      
      \def\sfR{{\mathsf R}}
\def\cD{{\mathcal D}}      
   \def\cH{{\mathcal H}}   
      \def\cL{{\mathcal L}}
\def\N{\mathbb{N}}
\newcommand{\dom}{\mathrm{dom}\,}
\newtheorem{theorem}{Theorem}
\newtheorem{lemma}[theorem]{Lemma}
\newtheorem{conjecture}{Conjecture}
\newcommand{\R}{\mathbb R}
\newcommand{\D}{\mathbb D}
\begin{document}

\begin{abstract}
This paper aims to show that, in the limit of strong magnetic fields, the optimal domains for eigenvalues of magnetic Laplacians tend to exhibit symmetry.
We establish several asymptotic bounds on magnetic eigenvalues to support this conclusion. Our main result implies that if, for a bounded simply-connected planar domain, the $n$-th eigenvalue of the magnetic Dirichlet Laplacian with uniform magnetic field is smaller than the corresponding eigenvalue for a disk of the same area, then the Fraenkel asymmetry of that domain tends to zero in the strong magnetic field limit. Comparable results are also derived for the magnetic Dirichlet Laplacian on rectangles, as well as the magnetic Dirac operator with infinite mass boundary conditions on smooth domains. As part of our analysis, we additionally provide a new estimate for the torsion function on rectangles.
\end{abstract}	

\maketitle
	
\section{Introduction}	
\subsection{Background and motivation}
The optimization of eigenvalues of differential operators with respect to the shape of a domain is a central theme in spectral geometry, featuring numerous intriguing results alongside challenging open questions. For the usual Laplacian with Dirichlet boundary conditions, the Faber-Krahn inequality~\cite{F23, K,K2} states that, among all bounded domains of fixed volume, the ball minimizes the lowest eigenvalue. In two dimensions, L.~Erd\H{o}s~\cite{Erdos} extended this inequality to the magnetic Laplacian with homogeneous magnetic field. A recent work~\cite{GJM} further strengthened Erd\H{o}s's result by providing a quantitative version. However, the problem of eigenvalue optimization for the magnetic Laplacian under other boundary conditions and for higher eigenvalues remains far from fully understood. Partial progress has been made in maximizing the lowest eigenvalue of the magnetic Laplacian with Neumann~\cite{CLPS24, KL24} and Robin~\cite{KL22} boundary conditions. In particular, the article~\cite{CLPS24} settles the optimization problem for the lowest eigenvalue under Neumann boundary conditions in the regime of moderate magnetic fields.

It was recently conjectured by Baur ~\cite{baur}--based on numerical evidence--that for any eigenvalue
of the magnetic Dirichlet Laplacian on bounded planar domains of fixed area, the disk is the optimal shape whenever the magnetic flux exceeds an explicit critical threshold. This observation is in shear contrast with the non-magnetic case, where the disk minimizes only the lowest eigenvalue, and is conjectured to optimize the third eigenvalue~\cite[Open problem 8]{H}. For other eigenvalues of the usual Dirichlet Laplacian on planar domains, the disk is never an optimizer, a fact which is verified analytically in~\cite{B15} by computing shape derivatives (see also the numerics from \cite{O04,AF12}).

Our aim in this note is to obtain asymptotic results supporting the conjecture of Baur. Moreover, we observe a broader pattern: optimal domains for magnetic eigenvalues tend to become symmetric as the magnetic field strength grows. We further illustrate this principle in the contexts of the magnetic Laplacian on rectangles and magnetic Dirac operators with infinite mass boundary conditions.

\subsection{Statement of the results}
Let $\Omega \subset \R^2$ be a bounded simply-connected planar domain. At this stage no regularity of the boundary of $\Omg$ is assumed. We consider the shifted magnetic Laplacian with Dirichlet boundary conditions\footnote{This operator is also one of the components of the magnetic Dirichlet-Pauli operator (see \cite{MagneticPauli}).}
\[ \mathscr{L}_{\Omega,B}u := -(\nabla -i \mathbf A)^2u -Bu,
\qquad \dom\mathscr{L}_{\Omega,B} := \big\{u\in H^1_0(\Omg)\colon\Delta u \in L^2(\Omg)\big\}.
\]
Here the vector field $\mathbf A = (A_1,A_2)^\top$ generates a homogeneous magnetic field of strength $B\geq 0$, i.e. $\partial_1 A_2 - \partial_2 A_1 = B$
and for the sake of definiteness one can use the Landau gauge $\mathbf A = (-Bx_2,0)^\top$ in the definition of the operator $\mathscr{L}_{\Omega,B}$. Since the domain $\Omg$ is simply-connected, the spectrum of $\mathscr{L}_{\Omega,B}$ does not depend on the gauge of the vector potential, at least for regular enough vector potential and  boundary of $\Omg$. In the proofs, it is more convenient to use the gauge associated with the torsion function rather than the Landau gauge. The torsion function–based gauge will be introduced later. For a comprehensive treatment of the spectral theory of the magnetic Laplacian, the reader is referred to the monographs~\cite{FH-b, R17}.

The operator $\mathscr{L}_{\Omega,B}$ is self-adjoint with  compact resolvent, and it has a non-decreasing sequence of eigenvalues,
\[ 0<\lambda_1(\Omega,B) \leq \lambda_2(\Omega,B) \leq \cdots, 
\]
which are repeated with multiplicities. It is well-known that the first eigenvalue is positive: it follows from \cite[Theorem 3.1]{HelfferSundqvist} for instance, see also Lemma \ref{lem.lowerbound} below. We are interested in the dependence on $\Omega$ of these eigenvalues when $B$ is large. Our work is in fact motivated by the following conjecture from the recent article \cite{baur}, which is supported by numerics.

\begin{conjecture}{\rm (}\cite[Conjecture 4.2]{baur}{\rm )}\label{conjecture}
Let $\Omega \subset \R^2$ be a bounded simply-connected planar domain with area $|\Omega|=1$, and $n \in \mathbb N$. Then, for all $B\ge 2\pi n$,
\begin{equation}\label{eq.conjecture}
\lambda_n(\Omega,B) \geq \lambda_n(\D,B),
\end{equation}
where $\D$ is the disk of unit area.
\end{conjecture}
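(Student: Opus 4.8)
The plan is to prove Conjecture~\ref{conjecture} by recasting the eigenvalue problem in the torsion-function gauge and reducing it to a sharp comparison, in the spirit of Erd\H{o}s, between a weighted Dirichlet problem on $\Omega$ and its counterpart on the disk. Fix the gauge $\mathbf A = B(\partial_2\tau_\Omega,-\partial_1\tau_\Omega)$, where $\tau_\Omega$ is the torsion function of $\Omega$ (so $-\Delta\tau_\Omega=1$ in $\Omega$, $\tau_\Omega=0$ on $\partial\Omega$). Using the Dirichlet condition, so that the boundary term produced by $u\in H^1_0(\Omega)$ vanishes, one gets the factorisation $\langle \mathscr{L}_{\Omega,B}u,u\rangle = 4\int_\Omega e^{2B\tau_\Omega}|\partial_{\bar z}(e^{-B\tau_\Omega}u)|^2\,dx$ with $\partial_{\bar z}=\tfrac12(\partial_{x_1}+i\partial_{x_2})$, whence, after the substitution $v=e^{-B\tau_\Omega}u\in H^1_0(\Omega)$ and the min--max principle,
\[
\lambda_n(\Omega,B)=\min_{\substack{V\subset H^1_0(\Omega)\\ \dim V=n}}\ \max_{v\in V\setminus\{0\}}\ \frac{4\int_\Omega e^{2B\tau_\Omega}|\partial_{\bar z}v|^2\,dx}{\int_\Omega e^{2B\tau_\Omega}|v|^2\,dx}.
\]
For the disk $\tau_{\D}(x)=(R^2-|x|^2)/4$, $R=\pi^{-1/2}$, so the weight is a Gaussian $e^{2B\tau_{\D}}=\mathrm{const}\cdot e^{-B|x|^2/2}$ (the constant cancelling), and holomorphic trial functions $v=z^m$ annihilate the numerator; this is the magnetic Faber--Krahn setup of \cite{Erdos,GJM} for $n=1$.

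Next one must pin down $\lambda_n(\D,B)$. Separating variables in the angular-momentum channels turns the disk operator into radial operators with effective potentials $(\tfrac{m}{r}-\tfrac{Br}{2})^2$, whose bottoms sit at $r_m=\sqrt{2m/B}$; the $n$-th well $r_{n-1}$ lies inside $\{r<R=\pi^{-1/2}\}$ precisely when $B\ge 2\pi n$, which is why this threshold appears, $B/(2\pi)=BR^2/2$ being the number of Landau orbitals that a domain of unit area can host. An Agmon/tunnelling analysis of the Dirichlet condition at $r=R$ then gives $\lambda_j(\D,B)=c_j B^{\gamma_j}e^{-B/(2\pi)}(1+o(1))$ for $j\le\lfloor B/(2\pi)\rfloor$, with the next eigenvalue of order $1$. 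Conversely, Lemma~\ref{lem.lowerbound} together with Talenti's comparison theorem $\tau_\Omega^{\ast}\le\tau_{\D}$ (in particular $\|\tau_\Omega\|_\infty\le\|\tau_{\D}\|_\infty=1/(4\pi)$) already forces $\lambda_n(\Omega,B)\gtrsim B^{-N}e^{-B/(2\pi)}$ for any unit-area $\Omega$, so that $\lambda_n(\Omega,B)$ matches $\lambda_n(\D,B)$ at leading exponential order. For $n=1$ the full inequality is Erd\H{o}s's theorem and holds for every $B\ge0$; the content of Conjecture~\ref{conjecture} is that for $n\ge2$ the comparison becomes exact, prefactors and all, once $B\ge 2\pi n$.

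To upgrade the leading-order bound to the sharp inequality, the plan is to extend Erd\H{o}s's magnetic rearrangement to $n$-dimensional trial spaces: from a near-optimal $n$-dimensional $V\subset H^1_0(\Omega)$ for $\Omega$ one constructs a competitor subspace on $\D$ by (i) invoking Talenti's pointwise comparison of the symmetrised torsion functions to pass from the weight $e^{2B\tau_\Omega}$ to $e^{2B\tau_{\D}}$, (ii) proving a magnetic/complex P\'olya--Szeg\H{o}-type inequality controlling $\int e^{2B\tau}|\partial_{\bar z}v|^2$ under the relevant rearrangement while preserving the denominator, and (iii) inserting the result into the min--max for $\D$. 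The hypothesis $B\ge 2\pi n$ should ensure that only the first $n$ ``holomorphic channels'' $z^0,\dots,z^{n-1}$ are spectrally active below the Landau gap, so the competitor subspace can be taken to consist of boundary-corrected holomorphic polynomials of degree $<n$.

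The main obstacle is the classical clash between symmetrisation and the $\max$ over an $n$-dimensional subspace for $n\ge2$: rearranging the individual members of $V$ destroys their linear independence and the value of $\max_{v\in V}$ — the very reason rearrangement fails for higher non-magnetic Dirichlet eigenvalues (where the disk is indeed \emph{not} optimal). What should rescue the magnetic problem is the rigidity of the complex structure: projecting $\mathscr{L}_{\Omega,B}$ onto the lowest Landau band turns it, up to an error $O(e^{-cB})$, into a Toeplitz operator on the Bargmann--Fock space with symbol essentially $\mathbf 1_{\R^2\setminus\Omega}$, and the low-lying eigenvalues of such operators may obey a bathtub/rearrangement principle selecting the disk (a Faber--Krahn inequality at the Landau level). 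The delicate point — and the reason the present paper proves instead that the Fraenkel asymmetry of any domain violating \eqref{eq.conjecture} must vanish as $B\to\infty$, rather than \eqref{eq.conjecture} itself for all $B\ge 2\pi n$ — is that the band-projection error is of the \emph{same} exponential order $e^{-B/(2\pi)}$ as the eigenvalues being compared, so a soft Toeplitz comparison is insufficient: one needs either an exact, non-perturbative reduction to a rearrangement problem, or a genuinely $n$-dimensional magnetic P\'olya--Szeg\H{o} inequality with matching constants. That is the crux.
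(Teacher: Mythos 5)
The statement you are addressing is not a theorem of the paper but a conjecture quoted from Baur, and the paper deliberately does not claim a proof: it proves only the asymptotic Theorem~\ref{thm.1} and stresses explicitly that this ``does not imply Conjecture~\ref{conjecture}, even for large values of $B$.'' Your proposal likewise does not prove \eqref{eq.conjecture}. Up to the min--max identity in the torsion gauge (which is indeed the same starting point as the paper's \eqref{eq.minmax}) and the leading-order matching $\lambda_n(\Omega,B)\gtrsim e^{-B/(2\pi)}$ versus $\lambda_n(\D,B)\lesssim B^{n+1}e^{-B/(2\pi)}$ (Lemmas~\ref{lem.lowerbound} and~\ref{lem.disk} plus Talenti), everything that would actually yield the inequality is left as a program: step (ii), the ``$n$-dimensional magnetic P\'olya--Szeg\H{o} inequality with matching constants,'' and the alternative ``exact, non-perturbative reduction to a bathtub/rearrangement problem for the Landau-level Toeplitz operator,'' are precisely the unproven ingredients, and you say so yourself. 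The obstruction you name is genuine and unresolved: symmetrising the members of an $n$-dimensional trial space destroys the max over the subspace, and the lowest-Landau-band projection error is of the same exponential size $e^{-B/(2\pi)}$ as the eigenvalues being compared, so no soft comparison closes the gap. Since the comparison has to be exact including prefactors (the exponential factors $e^{-2B\varphi_{\rm m}^\Omega}$ and $e^{-2B\varphi_{\rm m}^{\D}}$ coincide when $\Omega$ is close to a disk), nothing in the proposal establishes \eqref{eq.conjecture} for any $n\ge 2$, nor for $n=1$ at finite $B$ beyond Erd\H{o}s's theorem, which is already known.

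For contrast, the paper's actual route accepts this loss of prefactor information and settles for a quantitative asymptotic statement: combining the disk upper bound $\lambda_n(\D,B)\le C_n B^{n+1}e^{-2B\varphi_{\rm m}^{\D}}$ (Lemma~\ref{lem.disk}), the Helffer--Sundqvist lower bound $\lambda_1(\Omega,B)\ge\lambda_1(\D,0)e^{-2B\varphi_{\rm m}^{\Omega}}$ (Lemma~\ref{lem.lowerbound}), and the quantitative Talenti-type estimate $\varphi_{\rm m}^{\D}-\varphi_{\rm m}^{\Omega}\ge c\,\alpha(\Omega)^3$ (Lemma~\ref{lem.torsion}) yields Theorem~\ref{thm.1}, i.e.\ only that a domain violating \eqref{eq.conjecture} must have Fraenkel asymmetry $O((\ln B/B)^{1/3})$. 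If you want to pursue your plan, the concrete missing deliverable is a rearrangement or Toeplitz-symbol inequality that controls the full $n$-dimensional min--max with an error $o(B^{-n-1}e^{-B/(2\pi)})$, or better, an exact monotonicity statement at fixed $B\ge 2\pi n$; also note that your disk asymptotics should be stated in the precise form $\lambda_n(\D,B)\asymp B^{n+1}e^{-2B\varphi_{\rm m}^{\D}}$ from \cite{MagneticPauli}, and the heuristic that exactly $\lfloor B/(2\pi)\rfloor$ ``wells fit inside the disk'' with the next eigenvalue of order one would itself need proof before it can carry any weight.
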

We remark that the assumption of unit area can be made without loss of generality, since the general case follows by scaling.
For domains of general area, the condition on $B$ should be replaced by the respective condition on the flux, $B |\Omega| \geq 2\pi n$. As explained in \cite{baur}, the above conjecture illustrates the strong influence of magnetic fields on shape optimization problems. Indeed, when $B=0$, the minimizing shape of the Laplace eigenvalue $\lambda_n(\Omega,0)$ is typically not a disk except for $n=1$ and conjecturally for $n=3$. In fact, the numerics from \cite{baur} suggest that the lower bound $B \geq 2\pi n$ is sharp. 

As an attempt of getting more insights on this conjecture, we explain in the present article how to get asymptotic results when $B$ is large. More precisely, we show that any minimizing shape of $\lambda_n(\Omega,B)$ must converge to a disk as $B \to \infty$. This is measured in terms of the Fraenkel asymmetry,
\begin{equation}\label{def.asymmetry}
\alpha(\Omega) = \inf_{x \in \R^2} \frac{|\Omega \Delta (\cD + x)|}{|\Omg|},
\end{equation}
where $\cD$ is the disk of the same area as $\Omg$, and $\Delta$ is the symmetric difference of sets \ie $A\Delta B := (A\sm B)\cup (B\sm A)$ for $A,B\subset\dR^2$.
 The Fraenkel asymmetry is widely used in quantitative spectral isoperimetric inequalities; see the review~\cite{BP17} and the references therein. We prove the following lower bound on the magnetic eigenvalues.

\begin{theorem}\label{thm.1}
For all $n \in \mathbb N$, there exist $c_n,B_n >0$ such that the following holds. If $\Omega \subset \R^2$ is a bounded simply-connected planar domain of unit area, then for all $B \geq B_n$,
\[ \frac{\lambda_n(\Omega,B)}{\lambda_n(\D,B)} \geq 1 + c_n B \alpha(\Omega)^3 - \frac{\ln B}{c_n},\]
where $\mathbb D$ is the disk of unit area.
\end{theorem}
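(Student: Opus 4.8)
The plan is to pass, through the gauge attached to the torsion function, to an equivalent weighted problem, for which both a lower bound on $\lambda_n(\Omega,B)$ valid for every $\Omega$ and a matching‑rate upper bound on $\lambda_n(\mathbb D,B)$ become transparent. Let $w_\Omega\in H^1_0(\Omega)\cap L^\infty(\Omega)$ be the torsion function of $\Omega$, so that $-\Delta w_\Omega=1$ in $\Omega$ and $0\le w_\Omega\le\|w_{\mathbb D}\|_\infty=\tfrac1{4\pi}$ by Talenti's comparison theorem, and choose the vector potential $\mathbf A=\nabla^\perp(-Bw_\Omega)$; it is smooth inside $\Omega$ and has curl $B$, and since $\Omega$ is simply connected it differs from the Landau gauge by a gradient, so multiplication by a suitable phase $e^{i\theta}$ is a unitary intertwining the two realisations, leaving the spectrum of $\mathscr{L}_{\Omega,B}$ unchanged. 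With $D_j=-i\partial_j-A_j$, the algebraic identity $\mathscr{L}_{\Omega,B}=(D_1-iD_2)(D_1+iD_2)$ gives $q_{\Omega,B}[u]=\|(D_1+iD_2)u\|^2$ on the form domain, and in this gauge $D_1+iD_2=-2i\,e^{Bw_\Omega}\,\partial_{\bar z}\!\left(e^{-Bw_\Omega}\,\cdot\,\right)$; hence the substitution $u=e^{Bw_\Omega}v$ turns the Rayleigh quotient into
\[
\frac{q_{\Omega,B}[u]}{\|u\|_{L^2}^2}=\frac{4\int_\Omega e^{2Bw_\Omega}\,|\partial_{\bar z}v|^2\,dx}{\int_\Omega e^{2Bw_\Omega}\,|v|^2\,dx}.
\]
As $w_\Omega$ vanishes on $\partial\Omega$ and is bounded, $u\mapsto v$ is, up to the phase, a bounded bijection of $H^1_0(\Omega)$ — or at least of the dense core of functions with compact support in $\Omega$, which suffices — so $\lambda_n(\Omega,B)$ is the $n$‑th min‑max value of this weighted quotient over $v\in H^1_0(\Omega)$.

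For the lower bound I would bound the weight crudely, $1\le e^{2Bw_\Omega}\le e^{2B\|w_\Omega\|_\infty}$, and combine this with the identity $4\int_\Omega|\partial_{\bar z}v|^2=\|\nabla v\|_{L^2(\Omega)}^2$, which holds for every $v\in H^1_0(\Omega)$ since the cross term integrates away. Passing this through the min‑max,
\[
\lambda_n(\Omega,B)\ \ge\ e^{-2B\|w_\Omega\|_\infty}\,\lambda_n^{\mathrm{Dir}}(\Omega)\ \ge\ e^{-2B\|w_\Omega\|_\infty}\,\pi j_{0,1}^2 ,
\]
the last inequality by Faber‑Krahn for unit‑area domains; this is, in essence, Lemma~\ref{lem.lowerbound}. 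For the disk I would test the weighted quotient on $v_k=\chi\,z^k$, $k=0,\dots,n-1$, with $\chi\equiv1$ on $\{w_{\mathbb D}>1/(2B)\}$ and supported in $\{w_{\mathbb D}>0\}$: each $v_k$ is holomorphic where $\chi\equiv1$, so $\partial_{\bar z}v_k$ is confined to the collar $\{0<w_{\mathbb D}<1/(2B)\}$, on which $e^{2Bw_{\mathbb D}}\le e$; by rotational symmetry the $v_k$ are mutually orthogonal in $L^2(e^{2Bw_{\mathbb D}}\,dx)$, and evaluating the explicit Gaussian‑type integrals on the disk gives
\[
\lambda_n(\mathbb D,B)\ \le\ C_n\,B^{n+1}\,e^{-2B\|w_{\mathbb D}\|_\infty}.
\]

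Dividing the two displays,
\[
\frac{\lambda_n(\Omega,B)}{\lambda_n(\mathbb D,B)}\ \ge\ \frac{\pi j_{0,1}^2}{C_n B^{n+1}}\;e^{\,2B\left(\|w_{\mathbb D}\|_\infty-\|w_\Omega\|_\infty\right)},
\]
and it remains to bound the exponent from below by a multiple of $B\,\alpha(\Omega)^3$. For this I would invoke a quantitative form of Talenti's (equivalently, Saint‑Venant's) comparison for the supremum of the torsion function, $\|w_{\mathbb D}\|_\infty-\|w_\Omega\|_\infty\ge c\,\alpha(\Omega)^3$ for unit‑area $\Omega$ — the stability companion to the rigidity statement that the disk uniquely maximises $\|w_\Omega\|_\infty$ at fixed area; since $\alpha\le2$, the sharper exponent $2$ would also do. With $t=2B(\|w_{\mathbb D}\|_\infty-\|w_\Omega\|_\infty)-(n+1)\ln B+\ln(\pi j_{0,1}^2/C_n)$ and $e^t\ge1+t$, this gives $\lambda_n(\Omega,B)/\lambda_n(\mathbb D,B)\ge1+2cB\alpha(\Omega)^3-(n+1)\ln B+O_n(1)$, and choosing $c_n\le\min\{2c,(n+2)^{-1}\}$ and $B_n$ large enough to swallow the $O_n(1)$ term yields the claimed inequality.

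The step I expect to be the main obstacle is the quantitative torsion estimate — the stability of Talenti's $L^\infty$‑comparison with an explicit power of the Fraenkel asymmetry. A subordinate technical point is to make the gauge transformation and the substitution $u=e^{Bw_\Omega}v$ rigorous without assuming regularity of $\partial\Omega$, which is handled by working on the dense core of compactly supported functions, where $w_\Omega$ and the phase are smooth, and by noting that the weighted form is comparable to the Dirichlet form on $H^1_0(\Omega)$.
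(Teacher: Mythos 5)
Your proposal is correct and follows essentially the same route as the paper: the torsion-function gauge and the factorization $\mathscr{L}_{\Omega,B}=d_{\mathbf A}d_{\mathbf A}^\times$ reduce everything to a weighted $\partial_{\bar z}$ quotient, from which you extract the uniform lower bound $\lambda_n(\Omega,B)\ge \lambda_1(\D,0)e^{-2B\varphi_{\rm m}^\Omega}$ (the Helffer--Sundqvist trick, Lemma~\ref{lem.lowerbound}), the disk upper bound $C_nB^{n+1}e^{-2B\varphi_{\rm m}^{\D}}$ via cut-off holomorphic trial functions (Lemma~\ref{lem.disk}), and conclude with the quantitative Talenti estimate $\varphi_{\rm m}^{\D}-\varphi_{\rm m}^\Omega\ge c\,\alpha(\Omega)^3$ and $e^t\ge 1+t$, exactly as in the paper. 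The one ingredient you invoke without proof, the cubic stability of the torsion maximum (Lemma~\ref{lem.torsion}), is the same essentially known result the paper cites and reproves in its appendix, so this matches the paper's structure rather than constituting a gap.
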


We emphasize that this theorem does not imply Conjecture \ref{conjecture}, even for large values of $B$. However, it supports the conjecture, by showing that optimal domains should asymptotically converge to a disk. Indeed, if $\Omega_B\subset\dR^2$ is a unit area bounded simply-connected domain such that $\lambda_n(\Omega_B,B) \leq \lambda_n(\D,B)$ then it should have small asymmetry,
\[ \alpha(\Omega_B)^3 \leq \frac{\ln B}{c_n^2 B},\]
for all $B$ sufficiently large.

In fact, our arguments also work for other shape optimization problems with strong magnetic field. For instance, we have a similar result on the optimization of $\lambda_n(\Omega,B)$ among rectangles, thus showing that an optimizer asymptotically converges to a square in the strong magnetic field limit.

\begin{theorem}\label{thm.2}
For all $n \in \mathbb N$, there exist $c_n,B_n>0$ such that the following holds. If $a>0$ and $\mathsf R_a$ is the rectangle of side lengths $a$ and $a^{-1}$, then for all $B \geq B_n$,
\[ \frac{\lambda_n(\mathsf R_a,B)}{\lambda_n(\mathsf R_1,B)} \geq 1 + c_n B \frac{(a^2-1)^2}{1+a^4} - \frac{\ln B}{c_n}. \]
\end{theorem}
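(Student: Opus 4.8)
My plan follows the strategy behind Theorem~\ref{thm.1}: reduce the comparison of magnetic eigenvalues to a comparison of the sup-norms of the torsion functions of $\mathsf R_a$ and $\mathsf R_1$, and then establish the needed torsion estimate. Recall that on a simply-connected domain $\Omega$ one may use the gauge $\mathbf A=(B\partial_2 v_\Omega,-B\partial_1 v_\Omega)$, where $v_\Omega$ is the torsion function ($-\Delta v_\Omega=1$ in $\Omega$, $v_\Omega=0$ on $\partial\Omega$, $v_\Omega>0$ in $\Omega$), without changing the spectrum. A direct computation (the ground-state transform) shows that the quadratic form of $\mathscr{L}_{\Omega,B}$ equals $4\int_\Omega e^{2Bv_\Omega}\,|\partial_{\bar z}(e^{-Bv_\Omega}u)|^2\,\mathrm{d}x$, with $\partial_{\bar z}:=\tfrac12(\partial_1+i\partial_2)$, so that, writing $u=e^{Bv_\Omega}w$,
\[
\lambda_n(\Omega,B)=\min_{\substack{V\subset H^1_0(\Omega)\\ \dim V=n}}\ \max_{w\in V\setminus\{0\}}\ \frac{4\int_\Omega e^{2Bv_\Omega}\,|\partial_{\bar z}w|^2\,\mathrm{d}x}{\int_\Omega e^{2Bv_\Omega}\,|w|^2\,\mathrm{d}x}.
\]
Using $e^{2Bv_\Omega}\le e^{2B\|v_\Omega\|_\infty}$ in the denominator, $e^{2Bv_\Omega}\ge1$ in the numerator, and the identity $4\int_\Omega|\partial_{\bar z}w|^2\,\mathrm{d}x=\int_\Omega|\nabla w|^2\,\mathrm{d}x$ valid for $w\in H^1_0(\Omega)$ (the cross term integrates to zero by the divergence theorem), this yields the lower bound $\lambda_n(\Omega,B)\ge\lambda_n^{\mathrm{Dir}}(\Omega)\,e^{-2B\|v_\Omega\|_\infty}$, $\lambda_n^{\mathrm{Dir}}$ being the $n$-th Dirichlet eigenvalue of $-\Delta$; in particular $\lambda_n(\mathsf R_a,B)\ge\lambda_1^{\mathrm{Dir}}(\mathsf R_a)\,e^{-2B\|v_{\mathsf R_a}\|_\infty}\ge2\pi^2 e^{-2B\|v_{\mathsf R_a}\|_\infty}$ uniformly in $a>0$. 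For the matching upper bound on the fixed square I would test the quotient with the $n$ trial states $w_j=(z-z_0)^{j-1}\eta_B$, $j=1,\dots,n$, with $z_0$ the centre of $\mathsf R_1$ and $\eta_B$ a cutoff equal to $1$ outside an $O(1/B)$-neighbourhood of $\partial\mathsf R_1$; Laplace's method at the non-degenerate maximum of $v_{\mathsf R_1}$ at $z_0$ (where $D^2v_{\mathsf R_1}(z_0)=-\tfrac12 I$ by symmetry, which also makes the Gram matrix of the $w_j$ essentially diagonal) gives $\lambda_n(\mathsf R_1,B)\le C_n\,B^{\,n+1}\,e^{-2B\|v_{\mathsf R_1}\|_\infty}$ for $B$ large, $C_n$ depending only on $n$.

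The new ingredient is a quantitative torsion estimate: there is $\kappa>0$ with
\[
\|v_{\mathsf R_1}\|_\infty-\|v_{\mathsf R_a}\|_\infty\ \ge\ \kappa\,\frac{(a^2-1)^2}{1+a^4}\qquad\text{for all }a>0.
\]
By the symmetry $\|v_{\mathsf R_a}\|_\infty=\|v_{\mathsf R_{1/a}}\|_\infty$ it suffices to treat $a\ge1$. The maximum is attained at the centre, and the Fourier representation of the torsion function of a rectangle gives, with $s=a^2$,
\[
M(a):=\|v_{\mathsf R_a}\|_\infty=\frac{s}{8}-\frac{4s}{\pi^3}\sum_{k=0}^{\infty}\frac{(-1)^k}{(2k+1)^3\,\cosh\!\big(\tfrac{(2k+1)\pi}{2s}\big)}.
\]
From this rapidly converging series I would verify that $M$ has vanishing first $s$-derivative and strictly negative second $s$-derivative at $s=1$ (so the square is a non-degenerate maximiser), that $M(a)<M(1)$ for all $a>1$ — using for large $a$ the elementary comparison $\|v_{\mathsf R_a}\|_\infty\le\tfrac1{8a^2}$ with an infinite strip, near $a=1$ the second-order expansion, and for intermediate aspect ratios the monotonicity of $M$ read off from the series — and that $M(a)\to0$ as $a\to\infty$. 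Since $\tfrac{(a^2-1)^2}{1+a^4}\sim2(a-1)^2$ as $a\to1$ and tends to $1$ as $a\to\infty$, the ratio $(M(1)-M(a))\big/\tfrac{(a^2-1)^2}{1+a^4}$ extends to a continuous strictly positive function on $[1,\infty]$, whose infimum one takes for $\kappa$; a non-sharp value such as $\kappa=\tfrac12\|v_{\mathsf R_1}\|_\infty$ already works with room to spare.

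Combining the three estimates,
\[
\frac{\lambda_n(\mathsf R_a,B)}{\lambda_n(\mathsf R_1,B)}\ \ge\ \frac{2\pi^2\,e^{-2B\|v_{\mathsf R_a}\|_\infty}}{C_n\,B^{\,n+1}\,e^{-2B\|v_{\mathsf R_1}\|_\infty}}\ =\ \frac{2\pi^2}{C_n}\,B^{-(n+1)}\,e^{2B(\|v_{\mathsf R_1}\|_\infty-\|v_{\mathsf R_a}\|_\infty)}\ \ge\ \frac{2\pi^2}{C_n}\,B^{-(n+1)}\,e^{2\kappa B\frac{(a^2-1)^2}{1+a^4}}.
\]
Taking logarithms and using $e^x\ge1+x$ bounds the right-hand side below by $1+2\kappa B\tfrac{(a^2-1)^2}{1+a^4}-(n+1)\ln B+\ln\tfrac{2\pi^2}{C_n}$; choosing $c_n:=\min\{2\kappa,(n+2)^{-1}\}$ and $B_n$ large enough to absorb $(n+1)\ln B$ and the additive constant into $\tfrac{\ln B}{c_n}$ then gives the stated inequality. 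The main obstacle is the torsion estimate — specifically, passing from the local data at the square and the strip comparison for elongated rectangles to the single clean bound with profile $\tfrac{(a^2-1)^2}{1+a^4}$, the intermediate aspect ratios requiring the monotonicity of $M$ to be extracted carefully from the series; the eigenvalue bounds themselves are robust and essentially those already used for Theorem~\ref{thm.1}.
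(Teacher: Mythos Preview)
Your overall strategy is the paper's: lower bound $\lambda_1(\mathsf R_a,B)$ via the ground-state transform (Lemma~\ref{lem.lowerbound}), upper bound $\lambda_n(\mathsf R_1,B)$ via holomorphic trial states on the square (Lemma~\ref{lem.disk}), and then invoke a quantitative torsion comparison for rectangles. Your eigenvalue bounds and the final combination step are correct and essentially identical to what the paper does.

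The difference, and the gap, lies in the torsion estimate. The paper (Theorem~\ref{thm.rect}) works with the \emph{double} Fourier series for $\varphi_{\rm m}^{\mathsf R_a}$, separates the diagonal terms $m=n$ (which contribute a multiple of $f(a^2)=a^2/(1+a^4)$), and bounds the off-diagonal remainder uniformly by the elementary inequality $|g(x,y)|\le g(x,1)$ for the rational function $g(x,y)=2f(y)-f(xy)-f(x/y)$ (Lemma~\ref{lem.g}). This yields the explicit constant $\tfrac{1}{24}$ in one stroke, without ever needing to establish separately that the square is the maximizer. Your compactness argument, by contrast, requires $M(a)<M(1)$ for \emph{every} $a>1$, and the second-order expansion near $a=1$ together with the strip bound $M(a)\le 1/(8a^2)$ for large $a$ does not cover the intermediate range. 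You correctly identify this as ``the main obstacle'' and appeal to monotonicity of $M$ ``read off from the series'', but this step is not carried out, and differentiating your single-sum formula in $s$ does not produce an obvious sign. So the proposal is incomplete precisely where you flag it; the paper's algebraic route sidesteps the issue entirely and delivers the profile $\tfrac{(a^2-1)^2}{1+a^4}$ directly.
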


The question of optimality of the square for the first magnetic eigenvalue was recently raised in \cite{K25}. Theorem \ref{thm.2} supports the conjecture from \cite{K25} that the square should always be optimal. However Theorem \ref{thm.2} only gives asymptotic convergence of the optimal rectangle to a square. Indeed, if $\mathsf{R}_a$ is such that $\lambda_n(\mathsf R_a,B) \leq \lambda_n(\mathsf R_1,B)$ then we deduce that
\[ \frac{(a^2-1)^2}{1+a^4} \leq \frac{\ln B}{c_n^2 B}, \]
for $B \geq B_n$. 

We remark that asymptotic optimality of the square was also shown for the principal eigenvalue of the Dirac operator on a rectangle with infinite mass boundary conditions in the large mass limit~\cite{BK22}. The spectral optimization for rectangles is also studied for the 
biharmonic operator~\cite{BF20}, where variables cannot be separated.

\medskip
Finally, we also get similar estimates for magnetic Dirac operators, thus suggesting that Conjecture \ref{conjecture} could have an analogue for such operators.
The non-magnetic Dirac operator with infinite mass boundary conditions
on $\Omega$ was introduced in~\cite{BFSV17a},
where self-adjointness of this operator was established. The introduction of this operator was partly motivated by the study of graphene quantum dots. 
Several later
works~\cite{CL20, CL24, LO18, PV21} were concerned with defining this operator on certain classes of non-smooth domains allowing
for corners.
In the non-magnetic setting, geometric bounds
and optimization of the smallest positive eigenvalue of the Dirac operator
with infinite mass boundary conditions on Euclidean domains were studied in~\cite{ABLO21,ABK24, BFSV17b,  BK22, DMS25, LO19}.
A counterpart of the Faber-Krahn inequality in the case of Dirac
operators on Euclidean domains remains an open problem, but
there is a numerical evidence in~\cite{ABLO21} for the validity
of this inequality, see also more recent numerical
study for higher eigenvalues in~\cite{ABK24}.  

In the magnetic case, a semi-classical analysis of the operator was performed in~\cite{BLRS23, LRR24}. We adopt some notation used therein.
In order to introduce the Dirac operator, we recall the definition of
Cauchy-Riemann operators
\begin{equation}\label{def.dz}
	\p_{\ov{z}} = \frac{\p_1+i\p_2}{2}\qquad\text{and}\qquad
	\p_z = \frac{\p_1-i\p_2}{2}
\end{equation}
and define the first-order operators
\begin{equation}\label{def.dA}
	d_{\bf A} := -2i\p_z -  A_1+i  A_2,\qquad
	d_{\bf A}^\times := -2i\p_{\ov{z}} -  A_1-i A_2,
\end{equation}
where as before $A_1$ and $A_2$ are the components of the magnetic vector potential $\mathbf A$. Let $\Omg\subset\dR^2$ be a bounded, simply-connected $\mathscr C^2$-smooth domain. We denote by $\nu = (\nu_1,\nu_2)$ the outer unit normal vector to the boundary of $\Omg$.
We remark that $\mathscr C^2$-smoothness of the domain  simplifies the definition of the Dirac operator
and allows to use methods, which are only available for more regular domains. In view of~\cite[Theorem 1.1]{BFSV17a} 
the magnetic Dirac operator
\begin{equation}
\begin{aligned}
	\mathscr{D}_{\Omega,B} u &:=
	\begin{pmatrix}
	0 & d_{\bf A} \\ d_{\bf A}^\times & 0
	\end{pmatrix}
	u,\\
	\dom\mathscr{D}_{\Omg,B} &:= \big\{u = (u_1,u_2)\in H^1(\Omg;\dC^2)\colon
u_2 = i(\nu_1+i\nu_2) u_1~ {\rm{on}}~\partial \Omega\big\}
\end{aligned}
\end{equation}
is self-adjoint on the Hilbert space $L^2(\Omg;\dC^2)$. 
Here, we implicitly used that adding the magnetic field leads to a bounded symmetric additive perturbation of the non-magnetic Dirac operator with infinite mass boundary conditions. Thus, adding the magnetic field does not influence self-adjointness.
The spectrum of this Dirac operator is purely discrete thanks to compact embedding of $H^1(\Omega)$ into $L^2(\Omega)$.
By~\cite[Proposition 1.5]{BLRS23}
we have $0\notin\sigma(\mathscr{D}_{\Omg,B})$. It is clear that the magnetic field manifests in the spectrum of $\mathscr{D}_{\Omg,B}$ and we  denote by $\lm_n^+(\Omg,B)$ (respectively by $\lm_n^-(\Omg,B)$) the positive (respectively, negative) eigenvalues of $\mathscr D_{\Omg,B}$ enumerated in non-decreasing (respectively, non-increasing) way and counted with multiplicities,
\[
	\dots\le\lm_2^-(\Omg,B)\le \lm_1^-(\Omg,B)< 0 < \lm_1^+(\Omg,B)\le \lm_2^+(\Omg,B)\le \dots
\]
We prove the following bound on the positive eigenvalues of $\mathscr{D}_{\Omega,B}$.

\begin{theorem}\label{thm.3}
For all $n \in \mathbb N$, there exist $c_n,B_n >0$ such that the following holds. If $\Omega \subset \R^2$ is a bounded, simply-connected, $\mathscr{C}^2$-smooth domain with unit area, then for all $B \geq B_n$,
\[ \frac{\lambda_n^+(\Omega,B)}{\lambda_n^+(\mathbb D,B)} \geq 1 + c_n B \alpha(\Omega)^3 - \frac{\ln B}{c_n}.\]
\end{theorem}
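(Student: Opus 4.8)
The plan is to mimic the argument behind Theorem~\ref{thm.1}, replacing the Dirichlet condition by the infinite-mass one, and to reduce everything to the maximum of the torsion function. Let $\psi_\Omega\ge0$ be the torsion function of $\Omega$ ($-\Delta\psi_\Omega=1$ in $\Omega$, $\psi_\Omega=0$ on $\partial\Omega$) and set $\Psi(\Omega):=\max_{\overline\Omega}\psi_\Omega$. First I would pass to the torsion-function gauge $\mathbf A=\nabla^\perp(-B\psi_\Omega)$, so that $\phi:=-B\psi_\Omega\le0$ obeys $\Delta\phi=B$, $\phi|_{\partial\Omega}=0$, and one has $d_{\mathbf A}^\times u=-2i\,e^{-\phi}\,\partial_{\bar z}(e^{\phi}u)$ and $d_{\mathbf A}u=-2i\,e^{\phi}\,\partial_{z}(e^{-\phi}u)$. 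Since $\|\mathscr D_{\Omega,B}u\|^2=\|d_{\mathbf A}^\times u_1\|^2+\|d_{\mathbf A}u_2\|^2$, the substitution $w_1:=e^{-B\psi_\Omega}u_1$, $w_2:=e^{B\psi_\Omega}u_2$ recasts the quadratic form of $\mathscr D_{\Omega,B}^2$ as
\[
\|\mathscr D_{\Omega,B}u\|^2=4\!\int_\Omega e^{2B\psi_\Omega}|\partial_{\bar z}w_1|^2+4\!\int_\Omega e^{-2B\psi_\Omega}|\partial_{z}w_2|^2,\qquad \|u\|^2=\int_\Omega e^{2B\psi_\Omega}|w_1|^2+\int_\Omega e^{-2B\psi_\Omega}|w_2|^2,
\]
the infinite-mass condition becoming $w_2=i(\nu_1+i\nu_2)w_1$ on $\partial\Omega$. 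This is the exact counterpart of the weighted representation of $\mathscr L_{\Omega,B}$ used for Theorem~\ref{thm.1}: the Dirichlet constraint $v|_{\partial\Omega}=0$ is now replaced by the coupling of $w_1$ to $w_2$ along the boundary. (Eliminating $u_2$ from the eigenvalue equation equivalently shows that the dominant component $u_1$ satisfies the large-parameter Robin condition $\partial_\nu u_1+i\,\partial_\tau u_1+\big(B|\nabla\psi_\Omega|+\lambda\big)u_1=0$ on $\partial\Omega$, which degenerates to the Dirichlet condition as $B\to\infty$ since $|\nabla\psi_\Omega|=-\partial_\nu\psi_\Omega>0$ there.)

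Next I would prove two one-sided eigenvalue estimates. \emph{Upper bound on the disk.} Here $\mathscr D_{\D,B}$ separates in angular-momentum sectors, each reducing to an explicit radial system; analysing the sectors tuned to the lowest Landau level and using that $\psi_{\D}(x)=\tfrac14(\tfrac1\pi-|x|^2)$ has the nondegenerate maximum $\Psi(\D)=\tfrac1{4\pi}$ yields $\lambda_n^+(\D,B)\le C_n B^{p_n}e^{-B\Psi(\D)}$. \emph{Lower bound on a general domain.} It suffices to bound $\lambda_1^+$, since $\lambda_n^+(\Omega,B)\ge\lambda_1^+(\Omega,B)\ge\operatorname{dist}(0,\sigma(\mathscr D_{\Omega,B}))=\big(\inf\|\mathscr D_{\Omega,B}u\|^2/\|u\|^2\big)^{1/2}$, so that only the exponential rate (not the constant) matters. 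Working in the weighted form above, I would show that a low-energy state cannot at the same time resemble a bulk lowest-Landau-level profile $u_1=e^{B\psi_\Omega}f$ with $f$ holomorphic --- for which $\int_\Omega e^{2B\psi_\Omega}|w_1|^2\sim e^{2B\Psi(\Omega)}$ --- and match its trace to $w_2$ along the boundary, as the latter would force the relevant data to be holomorphic and antiholomorphic simultaneously, which is overdetermined; a weighted Poincar\'e inequality for $\partial_{\bar z}$ on the bulk sublevel set $\{\psi_\Omega>\Psi(\Omega)-B^{-1}\}$, combined with this mismatch, then gives $\operatorname{dist}(0,\sigma(\mathscr D_{\Omega,B}))^2\ge c\,B^{-q}e^{-2B\Psi(\Omega)}$ uniformly over bounded simply-connected $\mathscr C^2$-domains of unit area. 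This is the Dirac counterpart of the lower bound behind Theorem~\ref{thm.1}; it is also accessible through the semiclassical analyses of~\cite{BLRS23,LRR24}, and rests on the same mechanism as the Dirichlet--Pauli estimate of~\cite[Theorem~3.1]{HelfferSundqvist} and Lemma~\ref{lem.lowerbound}.

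Finally I would combine these with the quantitative Saint--Venant inequality for $\|\psi\|_\infty$ established in the proof of Theorem~\ref{thm.1} --- the disk being the maximiser of $\Psi$ among fixed-area domains, by Talenti's comparison principle --- namely $\Psi(\D)-\Psi(\Omega)\ge c_0\,\alpha(\Omega)^3$. For $B\ge B_n$ this gives
\[
\frac{\lambda_n^+(\Omega,B)}{\lambda_n^+(\D,B)}\ \ge\ \frac{\sqrt c}{C_n}\,B^{-q_n}\,e^{B(\Psi(\D)-\Psi(\Omega))}\ \ge\ \exp\!\big(c_0B\,\alpha(\Omega)^3-D_n\ln B\big)\ \ge\ 1+c_0B\,\alpha(\Omega)^3-D_n\ln B,
\]
where the algebraic prefactor has been absorbed into $D_n\ln B$ for $B$ large and the last inequality is $e^x\ge1+x$; taking $c_n:=\min(c_0,1/D_n)$ proves the theorem.

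The hard part will be the uniform lower bound on $\operatorname{dist}(0,\sigma(\mathscr D_{\Omega,B}))$: one must reach the exponential rate $e^{-B\Psi(\Omega)}$ with constants independent of the domain, which requires Agmon/Carleman-type estimates carried out with the infinite-mass boundary coupling --- equivalently, with the induced large-parameter Robin condition on $u_1$ --- rather than with the Dirichlet condition, together with a verification that the weak tangential term $i\,\partial_\tau u_1$ does not spoil the exponential smallness. Secondary technical points are the radial analysis on the disk and the use of $0\notin\sigma(\mathscr D_{\Omega,B})$ and $\partial\Omega\in\mathscr C^2$ (so that $\nu$, hence the boundary condition, is $\mathscr C^1$) to make the spectral reductions rigorous.
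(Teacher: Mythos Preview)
Your overall architecture is correct and matches the paper: upper bound on $\lambda_n^+(\D,B)$, uniform lower bound on $\lambda_1^+(\Omega,B)$, then the quantitative torsion estimate and the elementary inequality $e^x\ge 1+x$. The final combination step is essentially identical to the paper's.

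Where you diverge is the lower bound, and this is where you leave a genuine gap. You propose to prove
\[
\operatorname{dist}\big(0,\sigma(\mathscr D_{\Omega,B})\big)^2\;\ge\;c\,B^{-q}\,e^{-2B\Psi(\Omega)}
\]
\emph{uniformly} in $\Omega$ via Agmon/Carleman estimates, a weighted Poincar\'e inequality on a sublevel set of $\psi_\Omega$, and an analysis of the boundary coupling. You acknowledge this is ``the hard part'' and do not carry it out. The references you invoke (\cite{BLRS23,LRR24}) give sharp asymptotics for a \emph{fixed} domain; the constants there depend on geometric data of $\Omega$ (curvature, inner radius, the Hessian of $\psi_\Omega$ at its maximum), and extracting a uniform bound from that machinery is neither stated there nor sketched in your proposal. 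Your heuristic about the holomorphic/antiholomorphic mismatch does not by itself produce a domain-independent constant.

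The paper sidesteps all of this with a short argument you miss. Instead of the quadratic form of $\mathscr D_{\Omega,B}^2$, it uses the nonlinear min--max characterisation of $\lambda_1^+(\Omega,B)$ in terms of a single scalar function $u\in\mathfrak H_{\bf A}(\Omega)$ (the formula~\eqref{eq:minmaxDirac}). The substitution $v=e^{-B\varphi^\Omega}u$---the same torsion-gauge change of unknown you write down---turns this into the \emph{non-magnetic} min--max with weights $e^{2B\varphi^\Omega}$ on the $L^2(\Omega)$ norms; since $\varphi^\Omega=0$ on $\partial\Omega$, the boundary term is unchanged. Bounding the weights by $e^{2B\varphi^\Omega_{\rm m}}$ gives immediately
\[
\lambda_1^+(\Omega,B)\;\ge\;e^{-2B\varphi^\Omega_{\rm m}}\,\lambda_1^+(\Omega,0),
\]
and then the known non-magnetic inequality $\lambda_1^+(\Omega,0)\ge\sqrt{2\pi/|\Omega|}$ supplies the uniform constant. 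This is a two-line adaptation of the Helffer--Sundqvist trick (Lemma~\ref{lem.lowerbound}) to the Dirac setting, and it eliminates the need for any semiclassical localisation or Carleman estimate. Your weighted representation of $\|\mathscr D_{\Omega,B}u\|^2$ is correct and closely related, but by working with $\mathscr D^2$ and both spinor components you lose the direct comparison with the non-magnetic first eigenvalue and are forced into the harder route.

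A minor point: your exponential rates are off by a factor of $2$ (the paper's bounds read $e^{-2B\varphi_{\rm m}}$ for $\lambda_n^+$ itself, not $e^{-B\varphi_{\rm m}}$), but this only affects the constant $c_n$ and is harmless for the conclusion.
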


The proofs of Theorems \ref{thm.1}, \ref{thm.2} and \ref{thm.3} follow the same strategy, and rely on three ingredients:
\begin{itemize}
\item Upper bounds on the $n$-th eigenvalue of the optimizing shape (i.e. on $\lambda_n(\mathbb D,B)$, $\lambda_n(\mathsf R_1,B)$ and $\lambda_n^+(\mathbb D,B)$) that are asymptotically sharp when $B$ is large. These bounds are given in Lemma \ref{lem.disk} and \ref{lem.upper.dirac}, respectively, and come from \cite{MagneticPauli,BLRS23}. The bound in Lemma~\ref{lem.disk} is stated for general convex domains and applies to both $\dD$ and $\sfR_1$.
\item Lower bounds on $\lambda_1(\Omega,B)$ and $\lambda_1^+(\Omega,B)$ that are uniform with respect to $\Omega$, and still catch the exponentially small behaviour as $B$ increases. For the magnetic Laplacian, this bound comes from \cite{HelfferSundqvist} (see Lemma \ref{lem.lowerbound}). In the case of the magnetic Dirac operator, we adapt the argument to prove Lemma \ref{lem.lower.dirac}.
\item In these asymptotic estimates, the torsion function $\varphi^\Omega$ (defined in Section \ref{sec.torsion}) plays a crucial role. In fact, the smallness of the eigenvalues is measured in terms of the maximum of the torsion function. It follows from Talenti's comparison principle that $\| \varphi^\Omega \|_\infty$ is maximized by the disk among domains of fixed area. We need a quantitative version of this result, see Lemma \ref{lem.torsion}, which can also be derived as a consequence from more general recent quantitative Talenti's comparison principle~\cite{AL25,ABMP23}.
In the case of rectangles, we prove that the square maximizes $\| \varphi^\Omega \|_\infty$ in Theorem \ref{thm.rect}, also in quantitative form. The latter result on the torsion function is new to the best of our knowledge.
\end{itemize}
It remains to outline the structure of the paper.
Section~\ref{sec.torsion} is devoted to the torsion function
and to quantitative isoperimetric inequalities for its maximal value.
We prove Theorems \ref{thm.1} and \ref{thm.2} in Section~\ref{sec.magnetic.laplacian}  while the proof of Theorem~\ref{thm.3} is given in Section~\ref{sec.dirac}. The paper is complemented by an appendix, where we provide a new proof of
an essentially known result on the torsion function, which is central for our analysis.

\section{On the maximum of the torsion function}\label{sec.torsion}

The spectra of the magnetic Dirichlet Laplacian $\mathscr{L}_{\Omega,B}$ and of the magnetic Dirac operator $\mathscr{D}_{\Omega,B}$ in strong magnetic fields are deeply related to the \emph{torsion function} $\varphi^\Omega$. This function is defined as the solution to the elliptic equation
\begin{equation}
\begin{cases}
-\Delta \varphi^\Omega = 1 &\quad {\rm in} \quad \Omega,\\
\varphi^\Omega = 0 &\quad {\rm on} \quad \partial \Omega.
\end{cases}
\end{equation}
Note that this elliptic equation has a unique solution in the Sobolev space $H^1_0(\Omg)$.
By the maximum principle, the function $\varphi^\Omega$ is positive inside $\Omega$, and we denote by $\varphi^\Omega_{\rm m}$ its maximum.

It is standard that the maximum of the torsion function is always maximized by the disk among domains of fixed area. This follows from Talenti's comparison principle \cite[Theorem 1]{Talenti}. In fact, we also have a quantitative version of this result, which takes into account the Fraenkel asymmetry $\alpha(\Omega)$ defined in \eqref{def.asymmetry}.

\begin{lemma}\label{lem.torsion} There is a $c >0$ such that, for all bounded open sets $\Omega \subset \R^2$,
\[ \varphi_{\rm m}^{\cD} - \varphi_{\rm m}^\Omega \geq c |\Omega| \alpha(\Omega)^3\]
where $\cD\subset\dR^2$ is a disk of the same area as $\Omega$.
\end{lemma}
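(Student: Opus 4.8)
The plan is to reduce the quantitative comparison for $\varphi_{\rm m}$ to the quantitative Faber--Krahn inequality (or equivalently to a quantitative version of Talenti's principle) via the standard Schwarz symmetrization machinery. Recall that Talenti's comparison principle says that the decreasing radial rearrangement $(\varphi^\Omega)^*$ of the torsion function is pointwise dominated by the torsion function $\varphi^{\cD}$ of the ball of the same area; in particular, evaluating at the origin, $\varphi_{\rm m}^\Omega = (\varphi^\Omega)^*(0) \le \varphi^{\cD}(0) = \varphi_{\rm m}^{\cD}$, which gives the inequality without remainder. To get the cubic-in-$\alpha$ improvement, I would track the \emph{deficit} in the chain of inequalities that proves Talenti's result: the only place a genuine loss occurs is in the isoperimetric inequality applied level-set by level-set, i.e. $P(\{\varphi^\Omega > t\}) \ge P(\{(\varphi^\Omega)^* > t\})$, and the gap here is controlled from below by the Fraenkel asymmetry of the superlevel set via the quantitative isoperimetric inequality of Fusco--Maggi--Pratelli, $P(E) \ge P(E^*)(1 + c\,\alpha(E)^2)$.

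Concretely, first I would set up the coarea/rearrangement identity: writing $\mu(t) = |\{\varphi^\Omega > t\}|$, one has from $-\Delta\varphi^\Omega = 1$ and the coarea formula the differential inequality
\[
1 \le -\mu'(t)\,\frac{P(\{\varphi^\Omega > t\})^2}{\text{(something)}},
\]
leading after the isoperimetric step to $1 \le -\mu'(t)\cdot 4\pi$ in the model case and, with the quantitative improvement, to a strictly better inequality on a set of $t$ where $\alpha(\{\varphi^\Omega>t\})$ is bounded below. Integrating this refined ODE from $0$ to $\varphi_{\rm m}^\Omega$ and comparing with the ball's torsion function $\varphi^{\cD}(r) = \tfrac{1}{4}(R^2 - r^2)$ (so $\varphi_{\rm m}^{\cD} = \tfrac{|\Omega|}{4\pi}$ up to normalization) yields $\varphi_{\rm m}^{\cD} - \varphi_{\rm m}^\Omega \gtrsim |\Omega| \int_0^{\varphi_{\rm m}^\Omega} \alpha(\{\varphi^\Omega > t\})^2 \, d(\text{measure})$. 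The final ingredient is a selection/continuity argument showing that at least one superlevel set inherits a definite fraction of the asymmetry of $\Omega$ itself: since $\{\varphi^\Omega > t\}$ interpolates between $\Omega$ (at $t=0$) and a shrinking near-point (as $t \to \varphi_{\rm m}^\Omega$), and $t \mapsto \alpha(\{\varphi^\Omega>t\})$ together with the measures vary in a controlled way, one extracts a lower bound of the form $\alpha(\{\varphi^\Omega > t\}) \gtrsim \alpha(\Omega)$ on a range of $t$ whose relative measure is $\gtrsim \alpha(\Omega)$; this is exactly where the third power in $\alpha(\Omega)^3$ comes from (one power from the length of the good $t$-interval, two powers from the quadratic quantitative isoperimetric gain).

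The main obstacle I anticipate is precisely this last step: transferring the asymmetry of $\Omega$ to a quantitative asymmetry of a \emph{single} level set and simultaneously controlling how much of the $t$-integral that level set governs. One cannot simply use $t=0$ because the deficit in the ODE must be integrated over a non-negligible $t$-range to affect $\varphi_{\rm m}$, and the asymmetry of level sets could a priori collapse immediately. Handling this requires either a stability estimate for superlevel sets of solutions of $-\Delta u = 1$ (they cannot deform too fast in $t$, by standard elliptic regularity and the Hopf lemma bounding $|\nabla \varphi^\Omega|$ from below near where it is large), or invoking directly one of the packaged quantitative Talenti inequalities from~\cite{AL25,ABMP23}, which already encode this bookkeeping; I would in fact present the short derivation via those references as the primary route and relegate the self-contained symmetrization argument to a remark, since the cited results give $\|\varphi^{\cD}\|_\infty - \|\varphi^\Omega\|_\infty \gtrsim |\Omega|\,\alpha(\Omega)^{\beta}$ for an explicit exponent $\beta$ that one only needs to check is $\le 3$ (worsening the exponent is harmless here since $\alpha(\Omega) \le 2$).
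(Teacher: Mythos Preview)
Your proposal is essentially the paper's proof: level-set/coarea differential inequality, quantitative isoperimetric inequality on superlevel sets, integration, and then transferring $\alpha(\Omega)$ to $\alpha(U_t)$ on a $t$-interval of length $\gtrsim |\Omega|\alpha(\Omega)$, which produces the cubic power exactly as you describe.

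The one place where your sketch diverges is the mechanism for the ``main obstacle'': you suggest controlling how fast the level sets deform via gradient bounds and the Hopf lemma, which is delicate for merely bounded open sets (no boundary regularity is assumed). The paper avoids this entirely with a cleaner, self-referential trick: it sets $s_\Omega=\sup\{t:\mu(t)\ge|\Omega|(1-\alpha(\Omega)/4)\}$, observes that for $t\le s_\Omega$ one has $|\Omega\setminus U_t|\le\tfrac14\alpha(\Omega)|\Omega|$ and hence $\alpha(U_t)\ge\tfrac12\alpha(\Omega)$ by a standard asymmetry comparison, and then bounds $s_\Omega$ from below by noting that $\varphi^\Omega-s_\Omega$ is the torsion function of $U_{s_\Omega}$, so by (non-quantitative) Talenti $\varphi_{\rm m}^\Omega-s_\Omega\le |U_{s_\Omega}|/(4\pi)$, which after a short case split yields $s_\Omega\gtrsim|\Omega|\alpha(\Omega)$. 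This replaces your elliptic-regularity step with a one-line application of Talenti on the sublevel domain, and works without any smoothness of $\partial\Omega$.
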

\begin{remark}
	Our analysis combined with a bound in~\cite{LZDJ15} on the best constant in the quantitative isoperimetric inequality~\cite{FMP} used in the proof of the above lemma yields as a by-product that we can choose 
	$c = \frac{1}{1024\pi}$.  
\end{remark}
The cubic-power term $\alpha(\Omega)^3$ on the right-hand side could probably be improved to $\alpha(\Omega)^2$, but we do not need such a precise result for our purposes. In fact, this improvement is considered to be an important open problem; see the discussion in~\cite{AL25}. Lemma~\ref{lem.torsion} was first obtained in \cite{Kim}, using probabilistic methods. For completeness we suggest the proof outsourced to the appendix and inspired by \cite[Theorem 1.2]{ABMP23}. In fact, this result in \cite{ABMP23} uses ideas from \cite{HN}, which were also adapted to the magnetic setting in \cite{GJM}.

Lemma~\ref{lem.torsion} has a counterpart for rectangular domains. The main additional ingredient in the case of rectangles is the fact that $\varphi^{\mathsf R_a}_{\rm m}$ is maximized by the square ($a=1$). We prove the following
stronger quantitative estimate on the maximum of the torsion function on rectangles $\mathsf R_a=(-a/2,a/2)\times (- \frac{1}{2a},\frac{1}{2a})$. This property of the torsion function on rectangles is new to the best of our knowledge. 
\begin{theorem}\label{thm.rect}
	For all $a>0$ we have
	\[  \varphi_{\rm m}^{\mathsf R_1} - \varphi_{\rm m}^{\mathsf R_a} \geq \frac{(a^2-1)^2}{24(1+a^4)}.\]
\end{theorem}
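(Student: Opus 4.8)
The plan is to exploit the explicit Fourier representation of the torsion function of a rectangle. Since $\mathsf R_a$ and $\mathsf R_{1/a}$ are congruent (a $90^\circ$ rotation carries one onto the other) and both $\varphi_{\rm m}^{\mathsf R_a}$ and the right-hand side are invariant under $a\mapsto 1/a$, it suffices to treat $a\ge 1$. Writing $q=\tfrac1{2a}$ for the half-height of $\mathsf R_a$, decompose $\varphi^{\mathsf R_a}(x,y)=\tfrac12(q^2-y^2)-v(x,y)$, where $v$ is harmonic in $\mathsf R_a$, vanishes on $\{y=\pm q\}$, and equals $\tfrac12(q^2-y^2)$ on $\{x=\pm a/2\}$. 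Expanding $v$ in the separated basis $\cosh\!\big((2k-1)\pi a\,x\big)\cos\!\big((2k-1)\pi a\,y\big)$ and evaluating at the center---where, by symmetry, the maximum is attained---one obtains, after using $\sum_{k\ge1}\frac{(-1)^{k-1}}{(2k-1)^3}=\frac{\pi^3}{32}$,
\[
\varphi_{\rm m}^{\mathsf R_a}=\frac{4}{\pi^3 a^2}\sum_{k\ge1}\frac{(-1)^{k-1}}{(2k-1)^3}\Big(1-\operatorname{sech}\tfrac{(2k-1)\pi a^2}{2}\Big),
\]
the square being the case $a=1$.

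I would then pass to the variable $\theta:=2\log a\ge0$ and use the algebraic identity $\frac{(a^2-1)^2}{1+a^4}=1-\operatorname{sech}\theta$. With $H(\theta):=\varphi_{\rm m}^{\mathsf R_{e^{\theta/2}}}$, which by the rotation symmetry is an even, real-analytic function, the assertion becomes the one-variable inequality $H(0)-H(\theta)\ge\frac{1-\operatorname{sech}\theta}{24}$ for $\theta\ge0$, i.e. $\Phi(\theta):=24\big(H(0)-H(\theta)\big)-\big(1-\operatorname{sech}\theta\big)\ge0$. Here $\Phi(0)=0$ and, because $H$ is even, $\Phi'(0)=-24\,H'(0)=0$; thus the leading-order terms cancel and the inequality is genuinely a second-order statement at $\theta=0$, which is the source of its delicacy.

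Next I would split $[0,\infty)$ at an explicit threshold $\theta_0$. For $\theta\ge\theta_0$ the Leibniz criterion gives $\varphi_{\rm m}^{\mathsf R_a}\le\tfrac1{8a^2}$, hence $H(0)-H(\theta)\ge H(0)-\tfrac{e^{-\theta}}8$; combining with $H(0)\ge\tfrac18-\tfrac4{\pi^3}\operatorname{sech}\tfrac\pi2$ (again Leibniz), the claim reduces to $3e^{-\theta}-\operatorname{sech}\theta\le 2-\tfrac{96}{\pi^3}\operatorname{sech}\tfrac\pi2$ for $\theta\ge\theta_0$, and this holds because $\theta\mapsto 3e^{-\theta}-\operatorname{sech}\theta$ is decreasing---a consequence of the elementary bound $\operatorname{sech}\theta\tanh\theta\le\min\{\tfrac12,\,2e^{-\theta}\}\le 3e^{-\theta}$. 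For $\theta\in[0,\theta_0]$ I would instead prove $\Phi''\ge0$; together with $\Phi(0)=\Phi'(0)=0$ this yields $\Phi\ge0$ on that interval. Differentiating the series for $H$ twice (legitimate, since the $k$-th term and its $\theta$-derivatives decay doubly-exponentially in $k$), $\Phi''$ equals an explicit combination of $e^{-\theta}$ and $\operatorname{sech}\theta$ plus $\tfrac{96}{\pi^3}$ times a convergent series in $k$ whose $k=1$ term dominates; bounding the alternating remainder by Leibniz reduces $\Phi''\ge0$ to an explicit inequality in the single variable $\theta$ on the compact set $[0,\theta_0]$.

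The main obstacle is the interval $[0,\theta_0]$. Because of the tangency at $\theta=0$ forced by the evenness of $H$ and by the precise constant $\tfrac1{24}$, one must retain the $k=1$ term exactly and estimate the remaining terms rather sharply, and the margin in $\Phi''\ge0$ decreases towards the right endpoint of $[0,\theta_0]$, so $\theta_0$ and the numerical values of $\operatorname{sech}\tfrac\pi2,\operatorname{sech}\tfrac{3\pi}2,\dots$ must be pinned down with care. An alternative for this local step is a Taylor expansion of $H$ at $0$ with explicit remainder, amounting to $H''(0)<-\tfrac1{24}$; and a version of the statement with a weaker constant can also be deduced from a quantitative Talenti-type comparison principle, dispensing with the explicit formula.
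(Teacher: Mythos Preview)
Your single-series representation of $\varphi_{\rm m}^{\mathsf R_a}$ via the harmonic decomposition $\varphi=\tfrac12(q^2-y^2)-v$ is correct, the identity $\frac{(a^2-1)^2}{1+a^4}=1-\operatorname{sech}(2\log a)$ is a nice observation, and the large-$\theta$ part of your argument goes through: one can indeed take $\theta_0$ in the range $0.6$--$0.7$ so that $3e^{-\theta}-\operatorname{sech}\theta\le 2-\tfrac{96}{\pi^3}\operatorname{sech}\tfrac\pi2$ for $\theta\ge\theta_0$. However, the proof is not complete: on $[0,\theta_0]$ you only assert that $\Phi''\ge 0$ ``reduces to an explicit inequality in the single variable $\theta$ on a compact set'' after keeping the $k=1$ term and bounding the tail by Leibniz, but you never write down that inequality, let alone verify it. Because of the tangency at $\theta=0$ (forced by $\Phi(0)=\Phi'(0)=0$), this is precisely the delicate step, and as you yourself note the margin shrinks near the right endpoint; so as it stands this is a genuine gap rather than a routine omission. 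The alternative you mention---checking $H''(0)<-\tfrac1{24}$ with explicit remainder---would only give a neighbourhood of $0$, not all of $[0,\theta_0]$, unless accompanied by a uniform control of the fourth derivative, which you do not provide either.

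The paper takes a quite different route that avoids any numerical verification. It uses the \emph{double} Fourier series for $\varphi_{\rm m}^{\mathsf R_a}$, writes each term through $f(x)=\tfrac{x}{1+x^2}$, separates the diagonal $m=n$ from the off-diagonal terms, and bounds the latter via the purely algebraic inequality
\[
\Bigl|\,2f(y)-f(xy)-f(x/y)\,\Bigr|\ \le\ 2\bigl(f(1)-f(x)\bigr)\qquad(x,y>0),
\]
which is proved by an explicit factorisation of the rational function on the left. Summing then reduces to the closed-form values $\sum_{n\ge0}(2n+1)^{-4}=\tfrac{\pi^4}{96}$ and $\sum_{m\ne n}(2n+1)^{-2}(2m+1)^{-2}=\tfrac{\pi^4}{192}$, and the constant $\tfrac1{24}$ drops out exactly. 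Compared with your plan, this approach trades analytic estimates in $\theta$ for a single algebraic lemma about a rational function of two variables; it gives the sharp constant with no compact-interval verification and no appeal to numerical values of $\operatorname{sech}$.
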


\begin{proof}
	The torsion function of the rectangle $\mathsf R_a$ is explicitly given by
	\[ \varphi^{\mathsf R_a}(x_1,x_2) = \frac{16}{\pi^4} \sum_{n, m \geq 0} \frac{\sin \Big( \frac{(2n+1)\pi}{2a}(2x_1+a) \Big) \sin \Big( \frac{(2m+1)\pi a}{2}(2x_2+ \frac 1a) \Big)}{(2n+1)^3(2m+1)} \frac{a^2}{1+ a^4\frac{(2m+1)^2}{(2n+1)^2}},\]
	as one can find using Fourier series; \cf~\cite[Section 7]{BM24}. Moreover, its maximum is reached at the center $x=0$, and thus
	\[ \varphi_{\rm m}^{\mathsf R_a} = \frac{16}{\pi^4} \sum_{n,m \geq 0} \frac{(-1)^{n+m}}{(2n+1)^2(2m+1)^2}f \Big( a^2 \frac{2m+1}{2n+1} \Big),\]
	where $f(x) = \frac{x}{1+x^2}$. We extract from the sum the diagonal terms $m=n$,
	\[ \varphi_{\rm m}^{\mathsf R_a} = \frac{16}{\pi^4} \sum_{n \geq 0} \frac{1}{(2n+1)^4} f(a^2) + \frac{16}{\pi^4} \sum_{m >n}  \frac{(-1)^{n+m}}{(2n+1)^2(2m+1)^2} \Big( f \Big( a^2 \frac{2m+1}{2n+1} \Big) + f \Big( a^2 \frac{2n+1}{2m+1} \Big) \Big).\]
	Therefore, we have to get a lower bound for
	\[ \varphi_{{\rm m}}^{\mathsf R_1} - \varphi_{{\rm m}}^{\mathsf R_a} = \frac{16}{\pi^4} \sum_{n \geq 0} \frac{1}{(2n+1)^4} \big( f(1)-f(a^2) \big) + \frac{16}{\pi^4} \sum_{m >n}  \frac{(-1)^{n+m}}{(2n+1)^2(2m+1)^2} g \Big(a^2, \frac{2m+1}{2n+1} \Big),\]
	where $g(x,y) = 2f(y) - f(xy) - f(x/y)$.  By Lemma \ref{lem.g} below, the rational function $g$ is bounded by $|g(x,y)| \leq g(x,1) = 2(f(1)-f(x))$. Thus,
	\[ \varphi_{{\rm m}}^{\mathsf R_1} - \varphi_{{\rm m}}^{\mathsf R_a} \geq \frac{16}{\pi^4} \Big( \sum_{n \geq 0} \frac{1}{(2n+1)^4} - \sum_{m \neq n}  \frac{1}{(2n+1)^2(2m+1)^2} \Big) \big( f(1)-f(a^2) \big).\]
	We recall that
	\[ \sum_{n\geq 0} \frac{1}{(2n+1)^4} = \frac{15}{16} \zeta(4) = \frac{\pi^4}{96}, \quad \sum_{m \neq n}  \frac{1}{(2n+1)^2(2m+1)^2} = \frac{\pi^4}{192}.\]
	We deduce that
	\[ \varphi_{{\rm m}}^{\mathsf R_1} - \varphi_{{\rm m}}^{\mathsf R_a} \geq \frac {1}{12} (f(1)-f(a^2)) = \frac{(1-a^2)^2}{24(1+a^4)}.\qedhere\]
\end{proof}
To prove Theorem \ref{thm.rect}, we used a technical bound on a specific rational function.

\begin{lemma}\label{lem.g}
	Define the rational function $g$ by
	\[g(x,y) = \frac{2y}{1+y^2} - \frac{xy}{1+x^2y^2} - \frac{xy}{x^2+y^2}. \]
	Then, for all $x,y>0$,
	\[|g(x,y)| \leq g(x,1).\]
\end{lemma}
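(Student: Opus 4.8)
The plan is to linearize the whole statement by exploiting the symmetry $f(x)=f(1/x)$ of the function $f(x)=\frac{x}{1+x^2}$ used in the proof of Theorem~\ref{thm.rect}. First I would rewrite $g$ in terms of $f$: since $\frac{2y}{1+y^2}=2f(y)$, $\frac{xy}{1+x^2y^2}=f(xy)$ and $\frac{xy}{x^2+y^2}=f(x/y)$, one has $g(x,y)=2f(y)-f(xy)-f(x/y)$, exactly the combination already appearing in the proof of Theorem~\ref{thm.rect}, and in particular $g(x,1)=2\bigl(f(1)-f(x)\bigr)$.

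The key step is the substitution $p:=x+x^{-1}$ and $q:=y+y^{-1}$, so that $p\ge 2$ and $q\ge 2$ for every $x,y>0$, with $g(x,1)$ corresponding to $q=2$. Writing $f(t)=(t+t^{-1})^{-1}$ and using the elementary identities
\[ \bigl(xy+(xy)^{-1}\bigr)+\bigl(xy^{-1}+x^{-1}y\bigr)=pq,\qquad \bigl(xy+(xy)^{-1}\bigr)\bigl(xy^{-1}+x^{-1}y\bigr)=p^2+q^2-4, \]
I get $f(xy)+f(x/y)=\dfrac{pq}{p^2+q^2-4}$, with $p^2+q^2-4\ge p^2>0$ because $q\ge 2$. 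Together with $f(y)=q^{-1}$ and the factorization $2(p^2+q^2-4)-pq^2=(p-2)(2p+4-q^2)$ this yields the closed forms
\[ g(x,y)=\frac{(p-2)(2p+4-q^2)}{q(p^2+q^2-4)},\qquad g(x,1)=\frac{p-2}{p}. \]

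Finally, the inequality $|g(x,y)|\le g(x,1)$ is trivial when $p=2$ (both sides vanish); for $p>2$ I divide by $p-2$ and clear the positive denominators $q$, $p$ and $p^2+q^2-4$, reducing to the two polynomial inequalities $\pm\,p(2p+4-q^2)\le q(p^2+q^2-4)$. The first follows from
\[ q(p^2+q^2-4)-p(2p+4-q^2)=(q-2)\,(p^2+pq+2p+q^2+2q), \]
which is nonnegative since $q\ge 2$ and the second factor is a sum of positive terms; the second follows from
\[ q(p^2+q^2-4)+p(2p+4-q^2)=(q+2)\,\bigl(p^2+(2-q)p+q^2-2q\bigr), \]
where $q+2>0$ and the quadratic in $p$ has discriminant $(2-q)^2-4(q^2-2q)=-(q-2)(3q+2)\le 0$ for $q\ge 2$, hence is nonnegative for all $p$.

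The only genuine obstacle is spotting the change of variables: in the coordinates $(x,y)$ one is confronted with a degree‑$10$ polynomial inequality, whereas after setting $p=x+x^{-1}$, $q=y+y^{-1}$ the statement collapses to the sign of a quadratic in $p$ with parameter $q\ge 2$, together with one obviously nonnegative factorization. Verifying the two factorizations and the discriminant computation is routine, and no case analysis beyond the $p=2$ versus $p>2$ dichotomy is needed.
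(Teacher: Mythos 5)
Your proof is correct, and it takes a genuinely different route from the paper's. I checked the algebra: with $f(t)=(t+t^{-1})^{-1}$ one indeed has $g(x,y)=2f(y)-f(xy)-f(x/y)$; the identities $u+v=pq$ and $uv=p^2+q^2-4$ for $u=xy+(xy)^{-1}$, $v=xy^{-1}+x^{-1}y$ hold; the numerator $2(p^2+q^2-4)-pq^2$ factors as $(p-2)(2p+4-q^2)$; and both polynomial factorizations as well as the discriminant $4+4q-3q^2=-(q-2)(3q+2)$ check out. The paper instead reduces to $x,y\ge 1$ via the symmetries $g(x,y)=g(x^{-1},y)=g(x,y^{-1})$, factors $g$ directly as a rational function of $(x,y)$ with the explicit factor $(1-x)^2$, and then handles the cases $g>0$ and $g<0$ separately by discarding terms and applying crude bounds such as $2y\le 1+y^2$ and $2xy\le x^2+y^2$. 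Your substitution $p=x+x^{-1}$, $q=y+y^{-1}$ absorbs those symmetries into the coordinates from the outset, yields the closed forms $g=\frac{(p-2)(2p+4-q^2)}{q(p^2+q^2-4)}$ and $g(x,1)=\frac{p-2}{p}$, and collapses the whole statement to the nonnegativity of a quadratic in $p$ with parameter $q\ge 2$ — arguably a cleaner and more mechanical argument, at the price of having to spot the change of variables. The paper's route is more ad hoc but requires no structural insight beyond the factorization \eqref{eqg}. Either proof would serve; yours has the minor additional virtue of exhibiting exactly where equality can occur ($p=2$, i.e.\ $x=1$, or $q=2$, i.e.\ $y=1$).
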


\begin{proof}
	First note that we can assume $x,y \geq 1$ because of the symmetry $g(x,y) = g(x^{-1},y) = g(x,y^{-1})$. The function $g$ can be factorized as
	\begin{equation}\label{eqg}
		g(x,y) = \frac{y(1-x)^2(2x^2y^2 + 2y^2 - x(y^2-1)^2)}{(1+y^2)(1+x^2y^2)(x^2+y^2)}. 
	\end{equation}
	When $g(x,y) >0$, we simply remove the negative terms,
	\[g(x,y) \leq \frac{2y^3(1-x)^2(x^2+1)}{(1+y^2)(1+x^2y^2)(x^2+y^2)}. \]
	Since $2y \leq 1+y^2$, we have
	\[g(x,y) \leq \frac{y^2(1-x)^2(x^2+1)}{(1+x^2y^2)(x^2+y^2)}.\]
	The function $F_x : y \mapsto \frac{y^2}{(1+x^2y^2)(x^2+y^2)}$ is decreasing on $[1,\infty)$, as one can check by computing the derivative $(\ln F_x)'(y)$. Therefore, $F_x(y) \leq F_x(1)$ and thus
	\[ g(x,y) \leq \frac{(1-x)^2}{1+x^2} = g(x,1).\]
We now have to bound $|g|$ when $g(x,y) <0$. Removing the negative terms again we find
\[ -g(x,y) \leq \frac{y(1-x)^2x(y^2-1)^2}{(1+y^2)(1+x^2y^2)(x^2+y^2)}.\]
We use $2xy \leq x^2 + y^2$, and remove the $1$'s in the denominator to find
\[ -g(x,y) \leq \frac{(1-x)^2(y^2-1)^2}{2x^2 y^4} \leq \frac{(1-x)^2}{2x^2}. \]
Finally $2x^2 \geq 1+x^2$, and thus
\[ -g(x,y) \leq \frac{(1-x)^2}{1+x^2} = g(x,1).
\qedhere
\]
\end{proof}

\section{Eigenvalues of the magnetic Laplacian}\label{sec.magnetic.laplacian}

In the limit of large magnetic fields, it is known that the eigenfunctions of the magnetic Laplacian $\mathscr{L}_{\Omega,B}$ are exponentially localized near the maximum points of the torsion function $\varphi^\Omega$. In fact, this was one of the crucial observations in \cite{MagneticPauli}, where asymptotics of the form
\[ C_n' B^{n+1} e^{-2B \varphi_{\rm m}^\Omega} \leq \lambda_n(\Omega,B) \leq C_n'' B^{n+1} e^{-2B \varphi_{\rm m}^\Omega}, \quad \text{for $B$ large enough,}\] were proven\footnote{In fact, there are some assumptions on $\Omega$ in \cite{MagneticPauli}, including smoothness, and uniqueness of the maximum of $\varphi^\Omega$. This last property is always satisfied for convex domains (see \cite[Theorem 1]{KL87}).}. This result was the origin of our ideas to prove Theorem~\ref{thm.1}. However, the bounds in \cite{MagneticPauli} are far from uniform with respect to $\Omega$, hence it is impossible to apply them directly in our setting. We shall only use the upper bound, in the specific cases of a disk or square, but we will state the bound for more general convex domains. 

\begin{lemma}\label{lem.disk}
Let $n \in \N$ and let $\Omega\subset\dR^2$ be a bounded convex domain. Then there exist $M_n  = M_n(\Omega) >0$ and $C_n = C_n(\Omega) >0$ such that
\[ \lambda_n(\Omega,B) \leq C_n B^{n+1} e^{-2B \varphi_{\rm m}^\Omega},\]
for all $B \geq M_n$.
\end{lemma}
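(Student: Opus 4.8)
The plan is to construct an explicit trial subspace of dimension $n$ for the quadratic form of $\mathscr{L}_{\Omega,B}$ using Gaussian-type quasimodes concentrated at the maximum point $x_0$ of the torsion function $\varphi^\Omega$, and to estimate the Rayleigh quotient on this subspace. The key point is the torsion-function gauge: writing $\mathbf A = \nabla^\perp \varphi^\Omega = (-\partial_2\varphi^\Omega,\partial_1\varphi^\Omega)$ (so that $\operatorname{curl}\mathbf A = -\Delta\varphi^\Omega = 1$ and then rescaling by $B$), the function $e^{-B\varphi^\Omega}$ is an explicit (non-normalized, boundary-nonvanishing) solution of $d_{B\mathbf A} u = 0$ in $\Omega$, and hence the Pauli-type factorization makes $-(\nabla-iB\mathbf A)^2 - B = (d_{B\mathbf A})^* d_{B\mathbf A}$ annihilate it up to the boundary term. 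Since $\varphi^\Omega$ vanishes on $\partial\Omega$, the weight $e^{-B\varphi^\Omega}$ is $\equiv 1$ there and does not satisfy Dirichlet conditions, so one must cut it off.

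The steps, in order. First, fix a convex $\Omega$ and recall that for convex domains $\varphi^\Omega$ has a unique nondegenerate maximum at an interior point $x_0$ (cited from \cite{KL87}, with concavity of $\sqrt{\varphi^\Omega}$ giving the nondegeneracy), with $\varphi^\Omega(x_0) = \varphi_{\rm m}^\Omega$ and Hessian $-\tfrac12 H$ for some positive-definite $H$. Second, build trial functions of the form $u_k = \chi(x)\, p_k(x-x_0)\, e^{-B\varphi^\Omega(x)}$ for $k=1,\dots,n$, where $p_1,\dots,p_n$ are suitably chosen polynomials of degree $\le n-1$ (chosen to be orthogonal after Gaussian weighting) and $\chi$ is a fixed cutoff supported in a ball $B(x_0,r)\subset\subset\Omega$ equal to $1$ near $x_0$. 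Third, estimate the quadratic form: using the factorization, $\mathscr{L}_{\Omega,B}[u] = \|d_{B\mathbf A} u\|^2 = \|e^{-B\varphi^\Omega}\, (-2i\partial_z)(e^{B\varphi^\Omega} u)\|^2$ after conjugation; with $u_k$ as above, $e^{B\varphi^\Omega}u_k = \chi p_k$, so $d_{B\mathbf A}u_k = e^{-B\varphi^\Omega}(-2i\partial_z)(\chi p_k) = -2i\,e^{-B\varphi^\Omega}(p_k\,\partial_z\chi + \chi\,\partial_z p_k)$. On the support of $\nabla\chi$ the weight $e^{-2B\varphi^\Omega}$ is bounded by $e^{-2B\delta}$ for some $\delta>0$ (since $\varphi^\Omega \ge \delta$ there), which is exponentially smaller than the main contribution. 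The term $\chi\,\partial_z p_k$ contributes the bulk: after Laplace's method, $\|e^{-B\varphi^\Omega}\partial_z p_k\|^2 \sim B^{\,k}\,e^{-2B\varphi_{\rm m}^\Omega}$ type behavior, while $\|u_k\|^2 \sim B^{-1}e^{-2B\varphi_{\rm m}^\Omega}$ from the Gaussian integral $\int e^{-2B(\varphi_{\rm m}^\Omega - \varphi^\Omega(x))}\,dx \sim c\,B^{-1}e^{-2B\varphi_{\rm m}^\Omega}$. Fourth, choose the polynomials so that the $n\times n$ Gram matrix of the $u_k$ and the matrix of the form are simultaneously controlled (e.g. take $p_k$ to be eigenpolynomials of the model harmonic oscillator associated to the Hessian $H$), giving a subspace on which the Rayleigh quotient is $\le C_n B^{n+1}e^{-2B\varphi_{\rm m}^\Omega}$; the $\min$-$\max$ principle then yields the claim. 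The scaling $x-x_0 = B^{-1/2}y$ makes all these Gaussian moments transparent.

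The main obstacle is bookkeeping the error terms so that the cross terms between $p_k\partial_z\chi$ and $\chi\partial_z p_k$ in $\|d_{B\mathbf A}u_k\|^2$, as well as the off-diagonal entries of the Gram and form matrices, are genuinely lower order — one needs the exponential gain $e^{-2B\delta}$ from the cutoff region to dominate the polynomial factors $B^{O(1)}$, and one needs the $\partial_z p_k$ terms to not accidentally vanish to all orders (this is why $p_k$ must be chosen adapted to $\partial_z$ and to $H$, not arbitrarily). Since we only need the upper bound, and only for the disk and the square where $\varphi^\Omega$ and its Hessian at the center are fully explicit, one could alternatively bypass the general convex argument and compute directly: for $\dD$ one has $\varphi^{\dD}$ radial and the model operator at the center is an exact harmonic oscillator, so the quasimodes can be taken as genuine Laguerre/Hermite-type functions and the constants $C_n$, $M_n$ made explicit. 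I would present the convex statement but carry out the estimate with enough generality to cover both, keeping the Laplace-method asymptotics at the level of "standard" and only tracking the powers of $B$.
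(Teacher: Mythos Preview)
Your overall strategy (torsion gauge, Pauli factorization, polynomial quasimodes, min--max) is the right one and matches the paper's, but the implementation has a sign error that breaks the argument. In the paper's notation (with $A_1=B\partial_2\varphi^\Omega$, $A_2=-B\partial_1\varphi^\Omega$), the operator annihilating $e^{-B\varphi^\Omega}$ is $d_{\mathbf A}=-2i\partial_z-(A_1-iA_2)$, as you say. However, the factorization is $\mathscr{L}_{\Omega,B}=d_{\mathbf A}\,d_{\mathbf A}^\times$, so its quadratic form on $H^1_0(\Omega)$ is $\Psi\mapsto\|d_{\mathbf A}^\times\Psi\|^2$, \emph{not} $\|d_{\mathbf A}\Psi\|^2$. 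What you wrote, $(d_{B\mathbf A})^*d_{B\mathbf A}=d_{\mathbf A}^\times d_{\mathbf A}$, equals $\mathscr{L}_{\Omega,B}+2B$. Equivalently, the formal zero mode of $d_{\mathbf A}^\times$ is $e^{+B\varphi^\Omega}$, not $e^{-B\varphi^\Omega}$: for the disk, $\varphi^{\mathbb D}(x)=\tfrac{R^2-|x|^2}{4}$, so $e^{-B\varphi^{\mathbb D}}\propto e^{+B|x|^2/4}$, which is the \emph{growing} Gaussian, not the lowest-Landau-level profile.

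This sign flip makes your Laplace--method claims false. With $u_k=\chi\,p_k\,e^{-B\varphi^\Omega}$ and $\chi$ supported in a fixed ball $B(x_0,r)\Subset\Omega$, the weight $e^{-2B\varphi^\Omega}$ is \emph{minimal} at $x_0$ and \emph{maximal} on $\partial\,\mathrm{supp}\,\chi$, so $\|u_k\|^2=\int\chi^2|p_k|^2e^{-2B\varphi^\Omega}$ is dominated by the rim of the ball, not by a Gaussian at $x_0$; the asserted asymptotics $\|u_k\|^2\sim B^{-1}e^{-2B\varphi_{\rm m}^\Omega}$ does not hold. The same applies to the numerator, and the Rayleigh quotient you obtain is $O(1)$, not exponentially small with rate $2\varphi_{\rm m}^\Omega$.

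The correct picture, which the paper uses, is: write $\Psi=e^{+B\varphi^\Omega}v$ so that $d_{\mathbf A}^\times\Psi=-2ie^{B\varphi^\Omega}\partial_{\bar z}v$, and take $v_k=\chi_B\,(x_1+ix_2)^k$ with a $B$-dependent cutoff $\chi_B$ equal to $1$ outside a $B^{-1}$-neighbourhood of $\partial\Omega$. Two features are essential and differ from your plan: (i) the polynomials are \emph{holomorphic}, so $\partial_{\bar z}\big((x_1+ix_2)^k\big)=0$ and only the cutoff derivative survives in the numerator (your ``bulk term $\chi\,\partial_z p_k$'' should be zero by design, not the leading term); (ii) the cutoff lives near $\partial\Omega$, where $\varphi^\Omega\le C/B$ and hence $e^{2B\varphi^\Omega}=O(1)$, which is exactly what produces the rate $e^{-2B\varphi_{\rm m}^\Omega}$. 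With a fixed interior cutoff one would only get $e^{-2B\delta}$ for some $\delta<\varphi_{\rm m}^\Omega$, which is too weak for the applications in Theorems~\ref{thm.1}--\ref{thm.2}. Once you flip the sign of the exponential weight, use holomorphic $p_k$, and move the cutoff to a $B^{-1}$-layer at the boundary, your outline becomes the paper's proof.
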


The proof of the above lemma is essentially given in \cite{MagneticPauli} and even with a sharp asymptotic constant $C_n$ together with the corresponding lower bound. We present here a simpler version of the argument from \cite{MagneticPauli}, in which we do not aim at a sharp constant. Thanks to this simplification we could reduce the regularity assumptions on the boundary of the domain compared to~\cite{MagneticPauli}. We also point out that convexity of $\Omega$ is only used to guarantee that the torsion function of $\Omega$ has a unique global maximum with negative definite Hessian at the maximum point. We can instead assume that the domain $\Omega$ is such that the desired property of the torsion function holds, enlarging thus the class of domains to which the lemma applies. This extension is not needed for our purposes since we intend to apply this lemma only to disks and squares (both being convex domains).

\begin{proof}[Proof of Lemma~\ref{lem.disk}]
Recall that, for any convex domain, the square root of the torsion function is strictly concave~\cite[Theorem 1]{KL87} and
thus the torsion function has a unique global maximum.
Moreover, the Hessian of the torsion function at the point of the maximum
is negatively definite~\cite[Theorem 1]{S18}.

Without loss of generality we can assume that the domain $\Omg$ is positioned so that the unique  maximum of its torsion function is attained at the origin.
Note that since $\Omega$ is convex and, in particular, simply connected, we can choose the specific gauge of the magnetic field
\begin{equation}
A_1 = B \partial_2 \varphi^\Omg, \qquad A_2 = - B \partial_1 \varphi^\Omega, 
\end{equation}
which indeed satisfies $\partial_1 A_2 - \partial_2 A_1 = B$. Then, with the notations \eqref{def.dz} and \eqref{def.dA} we formally have
\[ \mathscr{L}_{\Omega, B} = -(\nabla - i\mathbf A)^2 - B = d_{\mathbf A}^{} d^\times_{\mathbf A}.\]
In particular, the quadratic form associated to $\mathscr{L}_{\Omega,B}$ can be written as
\[ \mathscr{Q}[\Psi] := \int_\Omg | d_{\mathbf A}^\times \Psi |^2 \dd x, \qquad \dom\mathscr{Q} :=  H^1_0(\Omg). \]
Now, writing $v = e^{-B \varphi^\Omg} \Psi$ for $\Psi\in H^1_0(\Omg)$, we have $v\in H^1_0(\Omg)$ and $d_{\mathbf A}^\times \Psi = - 2 i e^{B \varphi^\Omg} \partial_{\bar z} v$. This is due to our specific choice of gauge. In particular, by the min-max principle we have
\begin{equation}\label{eq.minmax}
\lambda_n(\Omg,B) = \inf_{\substack{V \subset H^1_0(\Omg) \\ \dim V =n}} \sup_{v \in V\setminus \{0\}} \frac{\displaystyle 4 \int_\Omg | \partial_{\bar z} v |^2 e^{2B \varphi^\Omg} \dd x }{\displaystyle\int_\Omg |v|^2 e^{2B \varphi^\Omg} \dd x}.
\end{equation}
We consider the following space of trial functions, $V= {\mathrm{span}} \{v_0, \dots, v_{n-1}\}$ with
\[ v_k(x) = \chi_B(x) B^{(k+1)/2} (x_1+ix_2)^k e^{- B \varphi^{\Omg}_{\rm m}}, \]
where the cut-off function $\chi_B\colon\Omg\arr\dR$ is defined as
\begin{equation}\label{eq:chiB}
	\chi_B(x) := \chi\big(B\cdot{\rm dist}\,(x,\p\Omg)\big)
\end{equation} 
with some $\chi \in C^\infty(\ov\dR_+)$ satisfying $0\le \chi\le 1$,
$\chi(0) = 0$ and $\chi(t) = 1$ for all $t \ge 1$. Here ${\rm dist}\,(\cdot,\p\Omg)$ stands for the usual distance function to the boundary of $\Omg$. This distance function is known to be $1$-Lipschitz~\cite[Section 3]{DZ94}. Hence, we infer that $\chi_B$ is Lipschitz continuous up to the boundary of $\Omg$.

When $v = \sum_{k=0}^{n-1} \alpha_k v_k$ with some complex coefficients $\{\alpha_k\}_{k=0}^{n-1}$, we can estimate its norm,
\begin{align*}
\int_\Omg |v|^2 e^{2B \varphi^\Omg} \dd x &= 
\int_\Omg \chi_B(x)^2 \Big| \sum_{k=0}^{n-1} \alpha_k B^{k/2}(x_1+ix_2)^k \Big|^2 e^{-2B( \varphi^\Omg_{\rm m} - \varphi^\Omg)}B  \dd x, \\
& \geq \int_{\cD(0,B^{- \frac 12})} \Big| \sum_{k=0}^{n-1} \alpha_k B^{k/2}(x_1+ix_2)^k \Big|^2 e^{-\varepsilon B |x|^2}B \dd x,
\end{align*}
where we restricted the integral to a small disk centred at the origin  $0$ with the radius $B^{-\frac12}$ (provided that $B$ is large enough), and used the fact that the Hessian of $\varphi^\Omg$ at the point of its maximum (the origin) is negative definite; here $\varepsilon$ is some positive constant whose existence is guaranteed, but whose precise admissible value is not essential for what follows. Then making a change of variable $y = \sqrt{B} x$
we get
\begin{align*}
\int_\Omg |v|^2 e^{2B \varphi^\Omg} \dd x \geq \int_{\cD(0,1)} \Big|\sum_{k=0}^{n-1} \alpha_k (y_1+iy_2)^k \Big|^2 e^{-\varepsilon|y|^2} \dd y.
\end{align*}
Note that the quantity 
\[N(P):= \left( \int_{\cD(0,1)} | P(y_1 + i y_2) |^2 e^{-\varepsilon |y|^2} \dd y \right)^{\frac 12}\] is a norm on the space of polynomial of degree $\leq n-1$. Since this is a finite dimensional space, this norm is equivalent to the Euclidean norm of the coefficients: there is a constant $K_{n}>0$ such that
\[\int_{\cD(0,1)} \Big|\sum_{k=0}^{n-1} \alpha_k (y_1+iy_2)^k \Big|^2 e^{-\varepsilon |y|^2} \dd y \geq K_{n}  \sum_{k=0}^{n-1}|\alpha_k|^2,\]
and $K_{n}$ is independent of the coefficients $\alpha_k$. We deduce that
\begin{equation}\label{est.norm}
\int_\Omg |v|^2 e^{2B \varphi^\Omg} \dd x \geq K_n \sum_{k=0}^{n-1}|\alpha_k|^2.
\end{equation}
In particular, we have shown as a by-product that $\dim V = n$.

We can also estimate the numerator in the min-max characterisation of $\lm_{n}(\Omg,B)$ evaluated on the trial function of $v_k$. Since $\partial_{\bar z} (x_1+ix_2)^k=0$, we have
\begin{align*}
4 \int_\Omg  | \partial_{\bar z} v_k |^2 e^{2B \varphi^\Omg} \dd x =B^{k+1}e^{-2 B  \varphi_{\rm m}^\Omg}\int_\Omg  | \nabla \chi_B |^2 e^{2B \varphi^\Omg} \dd x .
\end{align*}
The gradient of $\chi_B$ is supported on $B^{-1}$-neighbourhood of $\p\Omg$ \[
\hat\Omg_{B^{-1}}:=\{x\in\Omg\colon{\rm dist}\,(x,\p\Omg)\le B^{-1}\},\] 
and there exists a constant $C >0$  such that $|\nb \chi_B| \le CB$ for all sufficiently large $B >0$. Moreover, since $\varphi^\Omega$ vanishes on the boundary, we have $\varphi^\Omg \leq C' B^{-1}$ on $\hat\Omg_{B^{-1}}$ with some constant $C' > 0$ (It follows from boundedness of $\nabla \varphi^\Omg$, see \cite[Eq. 6.12]{S81} or \cite{HS}). Thus, we can find a constant $L_k >0$ (independent of $B$) such that
\begin{equation}\label{est.energy0}
4 \int_\Omg  | \partial_{\bar z} v_k |^2 e^{2B \varphi^\Omg} \dd x \leq L_k B^{k+3} e^{-2B \varphi_{\rm m}^\Omg} | \hat\Omega_{B^{-1}}|.
\end{equation}
Note that $|\hat \Omega_{B^{-1}}| \leq C'' B^{-1}$ since
any convex domain has Lipschitz boundary.  Coming back to \eqref{est.energy0}, we obtain
\begin{equation}\label{est.energy}
4 \int_\Omg  | \partial_{\bar z} v_k |^2 e^{2B \varphi^\Omg} \dd x \leq C'' L_k B^{k+2} e^{-2B \varphi_{{\rm m}}^\Omega}.
\end{equation}
When $v = \sum_{k=0}^{n-1} \alpha_k v_k$ is an arbitrary element of $V\setminus\{0\}$, we deduce from \eqref{est.energy} and the triangle inequality that
\[4 \int_\Omg  | \partial_{\bar z} v |^2 e^{2B \varphi^\Omg} \dd x \leq L_n' B^{n+1} e^{-2B \varphi_{\rm m}^\Omega} \sum_{k=0}^{n-1} |\alpha_k|^2,\]
with a new positive constant $L_n'$. Combining with \eqref{est.norm} we obtain, for all $v \in V\setminus\{0\}$,
\[ \frac{\displaystyle 4 \int_\Omg | \partial_{\bar z} v |^2 e^{2B \varphi^\Omg} \dd x }{\displaystyle\int_\Omg |v|^2 e^{2B \varphi^\Omg} \dd x} \leq \frac{L_n'}{K_n} B^{n+1} e^{-2B \varphi_{\rm m}^\Omega}  \]
and the estimate on $\lambda_n(\Omega,B)$ follows from the min-max principle \eqref{eq.minmax}.
\end{proof}

It is also possible to get lower bounds on the magnetic eigenvalues that are uniform with respect to $\Omega$, and that capture the exponential behaviour as $B \to \infty$. 

\begin{lemma}[{\cite[Theorem 3.1]{HelfferSundqvist}}]\label{lem.lowerbound}
Assume that $\Omega\subset\dR^2$ is a bounded simply connected domain of unit area. Then, for all $B \geq 0$,
\[ \lambda_1(\Omega,B) \geq \lambda_1(\D,0) e^{-2B \varphi_{\rm m}^\Omega}. \]
\end{lemma}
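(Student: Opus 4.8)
The plan is to reproduce the ground-state substitution argument of~\cite{HelfferSundqvist}, relying on the torsion-function gauge already used in the proof of Lemma~\ref{lem.disk}. Since $\Omega$ is simply connected, gauge invariance of the spectrum lets me replace the Landau gauge by $A_1 = B\,\partial_2\varphi^\Omega$, $A_2 = -B\,\partial_1\varphi^\Omega$, for which $\mathscr{L}_{\Omega,B} = d_{\mathbf A}\,d_{\mathbf A}^\times$ and the associated quadratic form is $\mathscr{Q}[\Psi] = \int_\Omega|d_{\mathbf A}^\times\Psi|^2\,\mathrm{d}x$ with form domain $H^1_0(\Omega)$; note that this part of the computation in Lemma~\ref{lem.disk} uses only simple connectedness and $\varphi^\Omega\in H^1_0(\Omega)$, not convexity. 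Because the torsion function is bounded, $0\le\varphi^\Omega\le\varphi^\Omega_{\mathrm m}$, multiplication by $e^{\pm B\varphi^\Omega}$ is a bounded bijection of $H^1_0(\Omega)$; writing $\Psi = e^{B\varphi^\Omega}v$ and using the identity $d_{\mathbf A}^\times\Psi = -2i\,e^{B\varphi^\Omega}\partial_{\bar z}v$ from the proof of Lemma~\ref{lem.disk}, the min-max principle yields
\[ \lambda_1(\Omega,B) = \inf_{v\in H^1_0(\Omega)\setminus\{0\}}\frac{\displaystyle 4\int_\Omega|\partial_{\bar z}v|^2\,e^{2B\varphi^\Omega}\,\mathrm{d}x}{\displaystyle\int_\Omega|v|^2\,e^{2B\varphi^\Omega}\,\mathrm{d}x}. \]

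Next I would bound the two weights separately. In the numerator $e^{2B\varphi^\Omega}\ge1$ since $B\ge0$ and $\varphi^\Omega\ge0$, while in the denominator $e^{2B\varphi^\Omega}\le e^{2B\varphi^\Omega_{\mathrm m}}$, so
\[ \lambda_1(\Omega,B)\ \ge\ e^{-2B\varphi^\Omega_{\mathrm m}}\,\inf_{v\in H^1_0(\Omega)\setminus\{0\}}\frac{\displaystyle 4\int_\Omega|\partial_{\bar z}v|^2\,\mathrm{d}x}{\displaystyle\int_\Omega|v|^2\,\mathrm{d}x}. \]
It then remains to identify the last infimum. For $v\in H^1_0(\Omega)$ one has $4|\partial_{\bar z}v|^2 = |\nabla v|^2 - 2\,\mathrm{Im}\big(\overline{\partial_1 v}\,\partial_2 v\big)$, and $\int_\Omega\overline{\partial_1 v}\,\partial_2 v\,\mathrm{d}x$ is real --- integrate by parts twice, first approximating $v$ in $H^1_0(\Omega)$ by functions in $C_c^\infty(\Omega)$ --- so $4\int_\Omega|\partial_{\bar z}v|^2\,\mathrm{d}x = \int_\Omega|\nabla v|^2\,\mathrm{d}x$. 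Hence the infimum equals $\lambda_1(\Omega,0)$, the first Dirichlet eigenvalue of the non-magnetic Laplacian, and the Faber--Krahn inequality gives $\lambda_1(\Omega,0)\ge\lambda_1(\D,0)$ because $|\Omega|=|\D|=1$. Combining the last two displays proves the claim; as a sanity check, for $B=0$ the bound reduces exactly to Faber--Krahn.

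The pointwise weight estimates and the Cauchy--Riemann identity of the middle step are routine. The one genuinely delicate point is the opening reduction: making the change of gauge and the ground-state substitution rigorous on $H^1_0(\Omega)$ when no regularity of $\partial\Omega$ is assumed. Here one uses that the torsion function is bounded with $\nabla\varphi^\Omega\in L^2(\Omega)$, so that the torsion gauge is an admissible vector potential gauge-equivalent to the Landau gauge on a simply-connected domain; alternatively, one may first establish the inequality for smooth $\Omega$ and then pass to the limit by domain approximation, which is the route taken in~\cite{HelfferSundqvist}.
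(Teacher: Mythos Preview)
Your argument is correct and is precisely the ground-state substitution of Helffer--Sundqvist that the paper cites; the paper itself gives no proof of this lemma, merely invoking \cite[Theorem~3.1]{HelfferSundqvist}. You have reproduced that argument using the same torsion-gauge identity $d_{\mathbf A}^\times(e^{B\varphi^\Omega}v)=-2i\,e^{B\varphi^\Omega}\partial_{\bar z}v$ already exploited in the proof of Lemma~\ref{lem.disk}, and you correctly flag the only technical subtlety (gauge invariance and the substitution on a domain without boundary regularity), which can indeed be handled by smooth-domain approximation as in the original reference.
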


We are now in position to prove Theorem \ref{thm.1}.
\begin{proof}[Proof of Theorem~\ref{thm.1}]
Let $n\in\dN$ be fixed.	
The upper bound from Lemma~\ref{lem.disk} applied to $\D$ and the uniform lower bound from Lemma \ref{lem.lowerbound} applied to $\Omg$ give for all $B \ge M_n(\D)$
\begin{equation}
\frac{\lambda_n(\Omega,B)}{\lambda_n(\D,B)} \geq \frac{\lambda_1(\D,0) e^{2B (\varphi_{\rm m} ^\D - \varphi_{\rm m}^\Omega)}}{C_n(\D) B^{n+1}}.
\end{equation}
We then use the quantitative estimate on the torsion function (Lemma \ref{lem.torsion}) to obtain for all $B  \ge M_n(\D)$
\begin{align*}
\frac{\lambda_n(\Omega,B)}{\lambda_n(\D,B)} &\geq \frac{\lambda_1(\D,0) e^{2c B \alpha(\Omega)^3}}{C_n(\D) B^{n+1}} = \frac{\lambda_1(\D,0)}{C_n(\D)} e^{2cB \alpha(\Omega)^3 - (n+1) \ln B} \\
&\geq \frac{\lm_1(\D,0)}{C_n(\D)}\left(2 c  B \alpha(\Omega)^3 - (n+1) \ln B\right),
\end{align*}
where we used the inequality $e^x \ge x$ in the last step.
Note that there exists $B_n > M_n(\D)$ and $c_n > 0$
which only depend on the index $n$ of the eigenvalue
such that
\[
\frac{\lm_1(\D,0)}{C_n(\D)}\left(2 c  B \alpha(\Omega)^3 - (n+1) \ln B\right) \ge 1+ c_n B\alpha(\Omega)^3 -\frac{\ln B}{c_n}
\]
for all $B \ge B_n$. Thus, the claim of the theorem follows.
\end{proof}

Following the same circle of ideas as in the proof of Theorem~\ref{thm.1} we can prove Theorem~\ref{thm.2}.
\begin{proof}[Proof of Theorem~\ref{thm.2}]
Let $n\in\dN$ be fixed.
The upper bound from Lemma~\ref{lem.disk} applied to the square $\sfR_1$, and the lower bound from Lemma \ref{lem.lowerbound}
applied to the rectangle $\sfR_a$ yield for all $B \ge M_n(\sfR_1)$
\[ \frac{\lambda_n(\mathsf R_a ,B)}{\lambda_n(\mathsf R_1,B)} \geq \frac{\lambda_1(\D,0) e^{2B (\varphi_{\rm m}^{\mathsf R_1} - \varphi_{\rm m}^{\mathsf R_a})} }{C_n(\sfR_1) B^{n+1}}. \]
By Theorem \ref{thm.rect} we have
\[ \varphi_{\rm m}^{\mathsf R_1} - \varphi_{\rm m}^{\mathsf R_a} \geq \frac{(a^2-1)^2}{24(1+a^4)}. \]
We deduce that
\[ \frac{\lambda_n(\mathsf R_a ,B)}{\lambda_n(\mathsf R_1,B)} \geq   \frac{\lambda_1(\D,0)}{C_n(\sfR_1) } \left(B\frac{(a^2-1)^2}{12(1+a^4)} -(n+1) \ln B\right), \]
where we again used the inequality $e^x \ge x$.
Clearly, there exist $c_n>0$, $B_n > M_n(\mathsf R_1)$ such that
\[
  \frac{\lambda_1(\D,0)}{C_n(\sfR_1) } \left(B\frac{(a^2-1)^2}{12(1+a^4)} -(n+1) \ln B\right)
  \ge 1 + c_n B \frac{(a^2-1)^2}{1+a^4} - \frac{\ln B}{c_n}
\]
for all $B \ge B_n$ and the claim of the theorem follows.
\end{proof}

\section{Magnetic Dirac operator with infinite mass boundary conditions}\label{sec.dirac}

In this section, we consider the magnetic Dirac operator $\mathscr{D}_{\Omega,B}$ with infinite mass boundary
conditions on a bounded planar simply-connected $\mathscr{C}^2$-smooth domain $\Omega$. We prove Theorem~\ref{thm.3} on the asymptotic minimizers of the positive eigenvalues. Since the domain $\Omega$ is simply-connected, we can choose any gauge $\mathbf A$ generating the homogeneous magnetic field $B$. It is convenient to use the following gauge, defined through the torsion function,
\[ A_1 =  B \partial_2 \varphi^\Omega, \qquad A_2 = - B \partial_1 \varphi^\Omega,\]
where $\varphi^\Omg$ is as before the torsion function of $\Omg$.
 In fact, the positive eigenvalues $\mathscr{D}_{\Omg,B}$ can be efficiently characterised by a non-linear variational principle.
In order to formulate it, we need to introduce several additional function spaces. Namely, the magnetic Hardy space
\[
\mathscr{H}_{\bf A}^2(\Omg) := \big\{u\in L^2(\Omg)\colon d^\times_{\bf A} u = 0, \, u|_{\p\Omg}\in L^2(\p\Omg)\big\}
\]
and the Hilbert space
\[
	\mathfrak{H}_{\bf A}(\Omg) := H^1(\Omg) + \mathscr{H}_{\bf A}^2(\Omg)
\]
with the inner product
\[
	\langle u,v\rangle_{\mathfrak{H}_{\bf A}(\Omg)} := 
	\langle u,v\rangle_{L^2(\Omg)}+ 	\langle d_{\bf A}^\times u,d^\times_{\bf A}v\rangle_{L^2(\Omg)}+  \langle u|_{\p\Omg},v|_{\p\Omg}\rangle_{L^2(\p\Omg)}.
\]
In fact, $\mathscr{H}_{\bf A}^2(\Omg)$ with the scalar product $\langle u|_{\p\Omg},v|_{\p\Omg}\rangle_{L^2(\p\Omg)}$ is a Hilbert space (see \cite[Section 2.1]{BLRS23} for details and more properties of this space). Then we have the following non-linear min-max principle, for all $n\in\dN$,
\begin{equation}\label{eq:minmaxDirac}
	\lm_n^+(\Omg,B) = \inf_{\begin{smallmatrix} \cL\subset\sH_{{\bf A}}(\Omg)\\ \dim \cL = n\end{smallmatrix}}\sup_{u\in\cL\sm\{0\}} \frac{\|u\|^2_{L^2(\p\Omg)} + \sqrt{\|u\|^4_{L^2(\p\Omg)}+ 4\|u\|^2_{L^2(\Omg)} \|d_{\bf A}^\times u\|^2_{L^2(\Omg)}}}{2\|u\|^2_{L^2(\Omg)}}.
\end{equation}
This principle for $\lm_1^+(\Omg,0)$ appeared for the first time in~\cite[Theorem 4]{ABLO21}. The magnetic case and the higher eigenvalues are covered by a more general result in~\cite[Theorem 1.9]{BLRS23}.
\medskip

In the analysis of magnetic Dirac eigenvalues we will employ certain geometric bounds on these eigenvalues in terms of the torsion function.
The first bound in the case of the disk is a direct consequence of the general result in~\cite[Theorem 1.11]{BLRS23}.

\begin{lemma}\label{lem.upper.dirac}
	For any $n\in\dN$, there exist constants $C_n, M_n > 0$ such that
	\[
		\lm_n^+(\D,B) \le C_n B^n e^{-2B \varphi^{\mathbb D}_{\rm m}}
	\] 
	holds for all $B \ge M_n$.
\end{lemma}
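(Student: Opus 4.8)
The plan is to mirror the proof of Lemma~\ref{lem.disk}, replacing the quadratic-form min-max by the non-linear min-max principle~\eqref{eq:minmaxDirac} and working in the torsion-function gauge $A_1=B\partial_2\varphi^\D$, $A_2=-B\partial_1\varphi^\D$ fixed at the start of this section. Exactly as in the proof of Lemma~\ref{lem.disk}, in this gauge one has $d_{\bf A}^\times u=-2i\,e^{B\varphi^\D}\partial_{\bar z}\big(e^{-B\varphi^\D}u\big)$, so that every $u$ of the form $u=e^{B\varphi^\D}w$ with $w$ holomorphic satisfies $d_{\bf A}^\times u=0$. Since the torsion function of the disk is the polynomial $\varphi^\D(x)=\frac14\big(R^2-|x|^2\big)$ with $R=\pi^{-1/2}$ and $\varphi_{\rm m}^\D=R^2/4$, each such $u$ is smooth on $\overline{\D}$, hence lies in $H^1(\D)\subset\mathfrak{H}_{\bf A}(\D)$ (indeed in the magnetic Hardy space $\mathscr{H}_{\bf A}^2(\D)$) and is admissible in~\eqref{eq:minmaxDirac}.

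Concretely I would take, for $k=0,\dots,n-1$,
\[ u_k(x):=B^{(k+1)/2}(x_1+ix_2)^k e^{B\varphi^\D(x)},\qquad \cL:=\mathrm{span}\{u_0,\dots,u_{n-1}\}. \]
For $u=\sum_{k=0}^{n-1}\alpha_k u_k\in\cL$ we have $d_{\bf A}^\times u=0$, so the numerator in~\eqref{eq:minmaxDirac} equals $2\|u\|_{L^2(\p\D)}^2$ and the Rayleigh quotient reduces to $\|u\|_{L^2(\p\D)}^2/\|u\|_{L^2(\D)}^2$. For the numerator, $\varphi^\D$ vanishes on $\p\D$, so parametrising $\p\D$ by $z=Re^{i\theta}$ and using orthogonality of the characters $e^{ik\theta}$ gives $\|u\|_{L^2(\p\D)}^2=2\pi R\sum_k|\alpha_k|^2 B^{k+1}R^{2k}\le C_n B^n\sum_k|\alpha_k|^2$ for $B\ge 1$. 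For the denominator, the identity $\varphi_{\rm m}^\D-\varphi^\D(x)=|x|^2/4$ gives $e^{2B\varphi^\D}=e^{2B\varphi_{\rm m}^\D}e^{-B|x|^2/2}$; restricting the integral to $\cD(0,B^{-1/2})\subset\D$ (for $B$ large) and rescaling $y=\sqrt{B}\,x$ turns $\|u\|_{L^2(\D)}^2$ into $e^{2B\varphi_{\rm m}^\D}$ times $\int_{\cD(0,1)}\big|\sum_k\alpha_k(y_1+iy_2)^k\big|^2 e^{-|y|^2/2}\dd y$, which is $\ge K_n e^{2B\varphi_{\rm m}^\D}\sum_k|\alpha_k|^2$ by the same equivalence-of-norms argument on polynomials of degree $\le n-1$ used for the constant $K_n$ in Lemma~\ref{lem.disk}; this also shows $\dim\cL=n$. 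Plugging both estimates into~\eqref{eq:minmaxDirac} yields $\lambda_n^+(\D,B)\le (C_n/K_n)B^n e^{-2B\varphi_{\rm m}^\D}$ for $B$ large, which is the claim. Alternatively, one may simply invoke~\cite[Theorem~1.11]{BLRS23}, after checking that the disk meets its hypotheses (smooth boundary, torsion function with a unique interior maximum at which the Hessian is negative definite).

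Most of this is routine bookkeeping taken over from Lemma~\ref{lem.disk}: the Laplace-type asymptotics of the interior integral, the rescaling, and the norm-equivalence step. The one genuinely new feature is the boundary term in~\eqref{eq:minmaxDirac}: in the Dirichlet setting the trial functions vanish on $\p\Omega$ and only the energy enters, whereas here the Hardy-space trial functions do not vanish on $\p\D$ and, because $d_{\bf A}^\times u=0$ on $\cL$, the numerator is \emph{entirely} the boundary norm $2\|u\|_{L^2(\p\D)}^2$. The point to get right is thus that this boundary contribution grows only polynomially in $B$ (like $B^n$), so it is still overwhelmed by the exponentially large interior $L^2$-norm $\sim e^{2B\varphi_{\rm m}^\D}$; this is also why the power here is $B^n$ rather than $B^{n+1}$ as in Lemma~\ref{lem.disk}.
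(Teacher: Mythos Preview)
Your proposal is correct. The paper itself does not prove this lemma at all: it simply records it as ``a direct consequence of the general result in~\cite[Theorem~1.11]{BLRS23}''. You mention this citation route at the end, so in that sense you cover the paper's approach.

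The direct argument you give is a genuine addition: you specialise the variational construction of \cite{BLRS23} to the disk, where the torsion function is explicitly $\varphi^{\D}(x)=\tfrac14(R^2-|x|^2)$, and plug holomorphic trial functions $u_k=e^{B\varphi^{\D}}z^k$ into the non-linear min-max~\eqref{eq:minmaxDirac}. Because $d_{\bf A}^\times u=0$ on your trial space, the quotient reduces to a pure boundary-to-interior $L^2$ ratio, and the explicit torsion function turns the interior integral into a Gaussian that yields the factor $e^{2B\varphi_{\rm m}^{\D}}$ without any Taylor-remainder estimate on the Hessian. All steps check out: the boundary computation by orthogonality of $e^{ik\theta}$, the rescaling $y=\sqrt{B}\,x$, and the norm-equivalence on polynomials of degree $\le n-1$ are exactly as in Lemma~\ref{lem.disk}. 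Your remark that the power is $B^n$ rather than $B^{n+1}$ because the boundary term replaces the energy term is the right explanation. This self-contained proof is more transparent than the bare citation, and in fact slightly simpler than the general argument in \cite{BLRS23} since no cut-off is needed on the disk.
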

The second estimate is a lower bound for general domains and its proof relies on an adaptation of the trick in~\cite{HelfferSundqvist} (Lemma \ref{lem.lowerbound}) to the non-linear variational characterisation of the magnetic Dirac eigenvalues.
\begin{lemma}\label{lem.lower.dirac}
	Let $\Omg\subset\dR^2$ be a bounded, simply-connected domain with $\mathscr C^2$ boundary. Then for any $B > 0$ we have
	\[
		\lm_1^+(\Omg,B) \ge \lm_1^+(\Omg,0) e^{-2B\varphi^\Omg_{\rm m}}.
	\]
\end{lemma}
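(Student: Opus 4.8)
The plan is to mimic the gauge trick behind Lemma~\ref{lem.lowerbound}, but carried through the non-linear min-max principle \eqref{eq:minmaxDirac} rather than the quadratic form of the magnetic Laplacian. With the torsion-based gauge $A_1 = B\partial_2\varphi^\Omg$, $A_2 = -B\partial_1\varphi^\Omg$, one has the factorisation identity $d_{\bf A}^\times u = -2i e^{B\varphi^\Omg}\partial_{\bar z}\bigl(e^{-B\varphi^\Omg} u\bigr)$, exactly as in the proof of Lemma~\ref{lem.disk}. The idea is therefore to substitute $u = e^{B\varphi^\Omg} v$ in the Rayleigh-type quotient on the right-hand side of \eqref{eq:minmaxDirac}: then $d_{\bf A}^\times u = -2i e^{B\varphi^\Omg}\partial_{\bar z} v$, while $d_{\bf 0}^\times v = -2i\partial_{\bar z} v$, so $\|d_{\bf A}^\times u\|_{L^2(\Omg)}^2 = \int_\Omg e^{2B\varphi^\Omg}|d_{\bf 0}^\times v|^2\,\dd x$, whereas $\|u\|_{L^2(\Omg)}^2 = \int_\Omg e^{2B\varphi^\Omg}|v|^2\,\dd x$. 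On the boundary $\varphi^\Omg=0$, so $u|_{\p\Omg} = v|_{\p\Omg}$ and the boundary norms are unchanged.

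First I would record the bound $0 \le \varphi^\Omg \le \varphi^\Omg_{\rm m}$ on $\Omg$, which gives the pointwise weight estimates $1 \le e^{2B\varphi^\Omg} \le e^{2B\varphi^\Omg_{\rm m}}$. Using these in the transformed quotient, the numerator of the fraction in \eqref{eq:minmaxDirac} (with $u = e^{B\varphi^\Omg}v$) is bounded below by
\[
\|v\|_{L^2(\p\Omg)}^2 + \sqrt{\|v\|_{L^2(\p\Omg)}^4 + 4\|v\|_{L^2(\Omg)}^2\,\|d_{\bf 0}^\times v\|_{L^2(\Omg)}^2},
\]
because dropping the weight only decreases $\|u\|_{L^2(\Omg)}^2$ in the cross term $4\|u\|^2\|d_{\bf A}^\times u\|^2 = 4\bigl(\int e^{2B\varphi^\Omg}|v|^2\bigr)\bigl(\int e^{2B\varphi^\Omg}|d_{\bf 0}^\times v|^2\bigr) \ge 4\bigl(\int |v|^2\bigr)\bigl(\int |d_{\bf 0}^\times v|^2\bigr)$, and similarly the standalone $\|u\|_{L^2(\p\Omg)}^2=\|v\|_{L^2(\p\Omg)}^2$ terms are untouched. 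Meanwhile the denominator satisfies $2\|u\|_{L^2(\Omg)}^2 = 2\int_\Omg e^{2B\varphi^\Omg}|v|^2\,\dd x \le 2 e^{2B\varphi^\Omg_{\rm m}}\|v\|_{L^2(\Omg)}^2$. Hence the whole quotient evaluated at $u=e^{B\varphi^\Omg}v$ is bounded below by $e^{-2B\varphi^\Omg_{\rm m}}$ times the corresponding non-magnetic quotient at $v$.

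Next I would check the bookkeeping for the min-max itself. The map $v \mapsto e^{B\varphi^\Omg}v$ is a linear bijection from $\mathfrak H_{\bf 0}(\Omg) = H^1(\Omg)$ onto a subspace of $\mathfrak H_{\bf A}(\Omg)$ — here one uses that $\varphi^\Omg\in C^\infty(\Omg)\cap C(\overline\Omg)$ is bounded with bounded gradient, so multiplication by $e^{B\varphi^\Omg}$ preserves $H^1$; it therefore sends an $n$-dimensional subspace of $H^1(\Omg)$ to an $n$-dimensional subspace of $\sH_{\bf A}(\Omg)$. Since the supremum over such a transformed subspace is at least $e^{-2B\varphi^\Omg_{\rm m}}$ times the supremum of the non-magnetic quotient over the original subspace, taking the infimum over $n$-dimensional subspaces of $H^1(\Omg)$ and invoking the non-magnetic version of \eqref{eq:minmaxDirac} (i.e. \eqref{eq:minmaxDirac} with $B=0$, which characterises $\lm_1^+(\Omg,0)$ for $n=1$) yields $\lm_1^+(\Omg,B) \ge e^{-2B\varphi^\Omg_{\rm m}}\lm_1^+(\Omg,0)$, as desired. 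I expect the main obstacle to be purely technical: justifying that restricting the infimum in \eqref{eq:minmaxDirac} from subspaces of the full space $\sH_{\bf A}(\Omg) = H^1(\Omg) + \sH^2_{\bf A}(\Omg)$ to subspaces of the form $e^{B\varphi^\Omg}\cdot(\text{subspace of }H^1(\Omg))$ is legitimate — this only makes the infimum larger, which is the wrong direction, so one must instead argue the reverse containment or handle the Hardy-space component directly. The cleanest fix is to note that for $n=1$ it suffices to exhibit, for \emph{every} $u\in\sH_{\bf A}(\Omg)\sm\{0\}$, a corresponding $v = e^{-B\varphi^\Omg}u$; since $d^\times_{\bf A}u\in L^2(\Omg)$ and $u|_{\p\Omg}\in L^2(\p\Omg)$ forces $v\in H^1(\Omg)$ with $v|_{\p\Omg}=u|_{\p\Omg}$, the one-dimensional infimum is genuinely unchanged by the substitution, and the weight inequalities above finish the argument.
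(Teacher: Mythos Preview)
Your overall strategy matches the paper's: use the torsion gauge, the factorisation $d_{\bf A}^\times(e^{B\varphi^\Omg}v)=e^{B\varphi^\Omg}d_0^\times v$, the fact that $\varphi^\Omg=0$ on $\partial\Omg$ kills the weight in the boundary term, and the pointwise bounds $1\le e^{2B\varphi^\Omg}\le e^{2B\varphi^\Omg_{\rm m}}$ to compare the magnetic and non-magnetic Rayleigh quotients in \eqref{eq:minmaxDirac}. The chain of inequalities you write is exactly the one in the paper.

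There is, however, a genuine gap in the last step. You write $\mathfrak H_{\bf 0}(\Omg)=H^1(\Omg)$ and then claim that for every $u\in\mathfrak H_{\bf A}(\Omg)$ the function $v=e^{-B\varphi^\Omg}u$ lies in $H^1(\Omg)$, because ``$d_{\bf A}^\times u\in L^2$ and $u|_{\partial\Omg}\in L^2$ force $v\in H^1$''. Both assertions are false. By definition $\mathfrak H_{\bf 0}(\Omg)=H^1(\Omg)+\mathscr H_{\bf 0}^2(\Omg)$, and the Hardy space $\mathscr H_{\bf 0}^2(\Omg)$ is strictly larger than its intersection with $H^1$: for instance on the disc, $v(z)=-\log(1-z)=\sum_{n\ge1}z^n/n$ has $\partial_{\bar z}v=0$, $v\in L^2$, $v|_{\partial\D}\in L^2(\partial\D)$, yet $\partial_z v=(1-z)^{-1}\notin L^2$, so $v\notin H^1$. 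Thus knowing only $v\in L^2$, $\partial_{\bar z}v\in L^2$ and $v|_{\partial\Omg}\in L^2$ does not place $v$ in $H^1(\Omg)$.

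What you actually need for the $n=1$ argument is only the inclusion $e^{-B\varphi^\Omg}\,\mathfrak H_{\bf A}(\Omg)\subset \mathfrak H_{\bf 0}(\Omg)$, so that the infimum of the non-magnetic quotient over the image is $\ge\lambda_1^+(\Omg,0)$. The paper obtains this (in fact the bijection) by decomposing $u=u_0+u_1$ with $u_0\in H^1(\Omg)$ and $u_1\in\mathscr H_{\bf A}^2(\Omg)$, and checking separately that $e^{-B\varphi^\Omg}u_0\in H^1(\Omg)$ (smoothness of $\varphi^\Omg$) while $e^{-B\varphi^\Omg}u_1\in\mathscr H_{\bf 0}^2(\Omg)$ via the identity $d_0^\times(e^{-B\varphi^\Omg}u_1)=e^{-B\varphi^\Omg}d_{\bf A}^\times u_1=0$. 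Replacing your unjustified ``$v\in H^1$'' by this decomposition argument closes the gap and recovers the paper's proof verbatim.
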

\begin{proof}
	Throughout the proof we use the abbreviation $\sH(\Omg) := \sH_0(\Omg)$ for the auxiliary Hilbert space corresponding to the zero vector potential.	First, we observe that the
	bounded and boundedly invertible multiplication operator 
	$\sfM u := e^{-B\varphi^\Omg} u$ acting in $L^2(\Omg)$ is a bijection between $\sH_{\bf A}(\Omg)$ and $\sH_0(\Omg)$. Indeed, for any $u\in\sH_{\bf A}(\Omg)$ we have a decomposition $u = u_0 + u_1$, where $u_0\in H^1(\Omg)$ and $u_1\in L^2(\Omg)$ satisfies $d_{\bf A}^\times u_1 = 0$ and $u_1|_{\p\Omg}\in L^2(\p\Omg)$. By regularity of the torsion function we have $\sfM u_0 \in H^1(\Omg)$.
	Moreover, it holds that $u_1|_{\p\Omg} = \sfM u_1|_{\p\Omg}\in L^2(\p\Omg)$, where we used that torsion function vanishes on the boundary of the domain $\Omg$. Finally, we also notice
	that for any $w\in L^2(\Omg)$ with $d_{\bf A}^\times w\in L^2(\Omg)$ we have
	\begin{equation}\label{eq:identity}
		d_0^\times \sfM w = e^{-B\varphi^\Omg}\Big(-2i \p_{\ov z} w + i B(\p_1\varphi^\Omg) w  - B(\p_2\varphi^\Omg) w\Big) = e^{-B\varphi^\Omg} d_{\bf A}^\times w. 
	\end{equation}
	Thus, we conclude that $\sfM u =\sfM u_0 + \sfM u_1\in\sH(\Omg)$, since $\sfM u_0\in H^1(\Omg)$ and
	by the above identity $\sfM u_1\in\mathscr{H}_{0}^2(\Omg)$. Hence, we have established that $\sfM$ maps $\sH_{{\bf A}}(\Omg)$ into $\sH(\Omg)$. Analogously, one can check that $\sfM^{-1}$ maps $\sH(\Omg)$ into $\sH_{{\bf A}}(\Omg)$. Moreover, since $\sfM$ has no kernel, we conclude that $\sfM$ is a bijection between $\sH_{\bf A}(\Omg)$ and $\sH(\Omg)$.
	Hence, by the variational characterisation~\eqref{eq:minmaxDirac} we get
	\[
	\begin{aligned}
		\lm_1^+(\Omg,B) &= \inf_{u\in \sH_{{\bf A}}(\Omg)\sm\{0\}}
		\frac{\|u\|^2_{L^2(\p\Omg)} + \sqrt{\|u\|^4_{L^2(\p\Omg)}+ 4\|u\|^2_{L^2(\Omg)} \|d_{\bf A}^\times u\|^2_{L^2(\Omg)}}}{2\|u\|^2_{L^2(\Omg)}}\\
		&=\inf_{v\in \sH_0(\Omg)\sm\{0\}}
		\frac{\|v\|^2_{L^2(\p\Omg)} + \sqrt{\|v\|^4_{L^2(\p\Omg)}+ 4\|e^{B\varphi^\Omg}v\|^2_{L^2(\Omg)} \|e^{B\varphi^\Omg}d_0^\times v\|^2_{L^2(\Omg)}}}{2\|e^{B\varphi^\Omg} v\|^2_{L^2(\Omg)}}\\
		&\ge e^{-2B\varphi^\Omg_{\rm m}}\inf_{v\in \sH_0(\Omg)\sm\{0\}}
				\frac{\|v\|^2_{L^2(\p\Omg)} + \sqrt{\|v\|^4_{L^2(\p\Omg)}+ 4\|v\|^2_{L^2(\Omg)} \|d_0^\times v\|^2_{L^2(\Omg)}}}{2\|v\|^2_{L^2(\Omg)}}\\
				& = e^{-2B\varphi^\Omg_{\rm m}}\lm_1^+(\Omg,0),
		\end{aligned}
	\]
	where we used the properties of the mapping $\sfM$, in particular, we employed the identity~\eqref{eq:identity}.
\end{proof}
We also recall a lower bound on $\lm_1^+(\Omg,0)$ for bounded $\mathscr C^2$-smooth simply-connected domains,
\begin{equation}\label{eq:lbnd}
	\lm_1^+(\Omg,0) \ge \sqrt{ \frac{2\pi}{|\Omg|}},
\end{equation}
which is proven in~\cite[Theorem 1]{BFSV17b}, see also~\cite[Theorem 1]{R06}.
\medskip

The proof of Theorem \ref{thm.3} is then identical to the one of Theorem \ref{thm.1}, using the upper bound in Lemma \ref{lem.upper.dirac}, the lower bound in Lemma \ref{lem.lower.dirac}, and the quantitative estimate on the torsion function, Lemma \ref{lem.torsion}.
For the sake of completeness, we provide it below.
\begin{proof}[Proof of Theorem~\ref{thm.3}]
	Let $n\in\dN$ be fixed.	
	The upper bound from Lemma~\ref{lem.upper.dirac} for the disk and the uniform lower bound from Lemma \ref{lem.lower.dirac} applied to $\Omg$ and combined with~\eqref{eq:lbnd} give for all $B \ge M_n$
	\begin{equation}
		\frac{\lambda_n^+(\Omega,B)}{\lambda_n^+(\D,B)} \geq \frac{\sqrt{2\pi} e^{2B (\varphi_{\rm m} ^\D - \varphi_{\rm m}^\Omega)}}{ C_n B^n}.
	\end{equation}
	We then use the quantitative estimate on the torsion function (Lemma \ref{lem.torsion}) to obtain for all $B  \ge M_n$
	\begin{align*}
		\frac{\lambda_n^+(\Omega,B)}{\lambda_n^+(\D,B)} &\geq \frac{\sqrt{2\pi} e^{2c B \alpha(\Omega)^3}}{ C_n B^n}
	= \frac{\sqrt{2\pi}}{C_n} e^{2cB \alpha(\Omega)^3 - n\ln B} \geq \frac{\sqrt{2\pi}}{ C_n} \left(2 c  B \alpha(\Omega)^3 -  n \ln B\right) ,
	\end{align*}
	where we used the inequality $e^x \ge x$ in the last step.
	Thus, the claim of the theorem follows from the above inequality,
	since there exist constants $c_n>0$, $B_n> M_n$ such that
	\[
	 \frac{\sqrt{2\pi}}{C_n} \left(2 c  B \alpha(\Omega)^3 -  n \ln B\right)  \ge 1 + c_n B \alpha(\Omega)^3 - \frac{\ln B}{c_n} 
	\]
	for all $B \ge B_n$.
\end{proof}

\begin{appendix}
	\section{Proof of Lemma~\ref{lem.torsion}}
		For $t \geq 0$ define the level sets $U_t = \lbrace x \in \Omega\colon \varphi^\Omega(x) \geq t \rbrace$, and $\mu(t) = | U_t |$. We start by bounding the perimeter $P(U_t)$ in terms of $\mu(t)$. By Sard's Theorem, for almost all $t$ the boundary $\partial U_t = \lbrace \varphi^\Omega = t \rbrace$ is smooth, and therefore
		\begin{align*}
		P(U_t)^2 = | \lbrace \varphi^\Omega = t \rbrace |^2 = \left( \int_{\lbrace \varphi^\Omega = t \rbrace} \dd \mathcal H^1 \right)^2,
		\end{align*}
		where $\dd \mathcal H^1$ is the measure induced by the Lebesgue measure\footnote{Also called the one-dimensional Hausdorff measure.} on the curve $\partial U_t$.
		By Sard's Theorem again, $\nabla \varphi^\Omega$ is non vanishing on $\lbrace \varphi^\Omega = t \rbrace$ for almost all $t$. Therefore we can use the Cauchy-Schwarz inequality to get
		\begin{align}\label{eq.P2}
		P(U_t)^2 \leq  \left(\int_{\lbrace \varphi^\Omega = t \rbrace} | \nabla \varphi^\Omega |\dd\cH^1\right)\left( \int_{\lbrace \varphi^\Omega = t \rbrace} \frac{1}{|\nabla \varphi^\Omega|}\dd\cH^1\right),
		\end{align}
		for almost all $t\geq 0$. The co-area formula gives
		\[ \mu(t) = \int_{U_t} \dd x = \int_t^{\varphi_{\rm m}^\Omega} \int_{\lbrace \varphi^\Omega = s \rbrace} \frac{1}{|\nabla \varphi^\Omega|}\dd\cH^1 \dd s\]
		and therefore
		\begin{equation}\label{eq.mu'}
		\mu'(t) = -\int_{\lbrace \varphi^\Omega = t \rbrace} \frac{1}{|\nabla \varphi^\Omega|}\dd\cH^1,
		 \end{equation}
		 for almost all $t\geq 0$.
		Moreover, by definition of $\varphi^\Omega$ we have
		\[ \mu(t) = \int_{\lbrace \varphi^\Omega \geq t \rbrace} - \Delta \varphi^\Omega \dd x,\]
		and since the inward-pointing normal to $\partial U_t$ is $\nabla \varphi^\Omega / | \nabla \varphi^\Omega |$, we can use Stokes Theorem to get
		\begin{equation}\label{eq.mu}
		\mu(t) = \int_{\lbrace \varphi^\Omega = t \rbrace} \nabla \varphi^\Omega \cdot \frac{\nabla \varphi^\Omega}{|\nabla \varphi^\Omega|} \dd \mathcal H^1 = \int_{\lbrace \varphi^\Omega = t \rbrace} |\nabla \varphi^\Omega | \dd \mathcal H^1,
		\end{equation}
		for almost all $t\geq 0$.
		Then, we use \eqref{eq.mu'} and \eqref{eq.mu} to rewrite \eqref{eq.P2} as 
		\begin{equation}
		 P(U_t)^2 \leq - \mu(t) \mu'(t),
		\end{equation}
		for almost all $t\geq 0$.
		We now use the quantitative isoperimetric inequality \cite[Theorem 1.1]{FMP} for $U_t$, which gives a universal constant $\mathsf c_{\rm iso} >0$ such that
		\[ P(U_t)^2 \geq 4\pi |U_t| \big( 1 + \mathsf c_{\rm iso} \alpha(U_t)^2 \big),\]
		and we deduce that 
		\[ 4 \pi( 1+  \mathsf{c}_{\rm iso} \alpha(U_t)^2 ) \leq - \mu'(t),\]
		for almost all $t \in (0, \varphi^\Omega_{\rm m})$. Integrating over the whole interval we find
		\[ 4 \pi \varphi^\Omega_{\rm m}  + 4 \pi \mathsf c_{\rm iso} \int_0^{\varphi^\Omega_{\rm m}} \alpha(U_t)^2 \dd t \leq \mu(0) = |\Omega|.\]
		Note that the torsion function of the disk $\cD$ is the radial function $r \mapsto \frac{|\Omega|}{4\pi} - \frac{r^2}{4}$. Therefore,
		\begin{equation}\label{eq.quantlow1}
			\varphi^{\cD}_{\rm m} - \varphi^{\Omega}_{\rm m} \geq \mathsf c_{\rm iso} \int_0^{\varphi_{\rm m}^\Omega} \alpha(U_t)^2 \dd t. 
		\end{equation}
		Now consider the threshold $s_\Omega$ from which $\mu(t)$ gets far from $|\Omega|$, namely
		\begin{equation}
			s_\Omega = \sup \mathscr A, \qquad \mathscr A = \Big\lbrace t \geq 0\colon \mu(t) \geq |\Omega | \Big( 1 - \frac{\alpha(\Omega)}{4} \Big) \Big\rbrace.
		\end{equation}
		For $t \in \mathscr{A}$ we have $ \frac{| \Omega \setminus U_t |}{|\Omega|} \leq \frac 1 4 \alpha(\Omega)$, which implies $\alpha(U_t) \geq \frac{\alpha(\Omega)}{2}$ by standard properties of the Fraenkel asymmetry~\cite[Lemma 2.8]{BP17}. Thus, equation \eqref{eq.quantlow1} gives
		\begin{equation}\label{eq.quantlow2}
			\varphi^{\cD}_{\rm m} - \varphi^{\Omega}_{\rm m} \geq \frac {\mathsf c_{\rm iso}} 4  s_\Omega \alpha(\Omega)^2.
		\end{equation}
		We now consider two complementary cases.
		\begin{itemize}
			\item [\rm (i)] If $\alpha(\Omega) \leq \frac{32 \pi}{|\Omega|} ( \varphi^{\cD}_{\rm m} - \varphi^{\Omega}_{\rm m} )$, then Lemma \ref{lem.torsion} is obvious since $\alpha(\Omega) \geq \frac14\alpha(\Omega)^3$.
			\item [\rm (ii)] If $\alpha(\Omega) \geq \frac{32 \pi}{|\Omega|} ( \varphi^{\cD}_{\rm m} - \varphi^{\Omega}_{\rm m} )$, we use that $\varphi^\Omega - s_\Omega$ is the torsion function on $U_{s_\Omega}$. Therefore, its maximum is smaller than the maximum of the torsion of the disk of volume $|U_{s_\Omega}|$:
			\[ \varphi_{\rm m}^\Omega - s_\Omega \leq \frac{|U_{s_\Omega}|}{4\pi} = \frac{|\Omega|}{4\pi} \Big( 1 - \frac{\alpha(\Omega)}{4} \Big).\]
			Since $\varphi_{\rm m}^{\cD} = \frac{|\Omega|}{4\pi}$, we deduce
			\[ s_\Omega \geq \frac{|\Omega| \alpha(\Omega)}{16\pi} - \big( \varphi^{\cD}_{\rm m} - \varphi^\Omega_{\rm m} \big) \geq \frac{|\Omega| \alpha(\Omega)}{32\pi}. \]
			Then \eqref{eq.quantlow2} gives $\varphi_{\rm m}^{\cD} - \varphi_{\rm m}^\Omega \geq \frac{\mathsf c_{\rm iso}}{128 \pi} |\Omega| \alpha(\Omega)^3$, which concludes the proof.
		\end{itemize}
		Finally, we remark that thanks to the bound $\mathsf c_{\rm iso} \ge \frac{1}{8}$ given in~\cite[Theorem 1]{LZDJ15} we can state that inequality in an explicit form
		\[
		\varphi_{\rm m}^{\cD} - \varphi_{\rm m}^\Omega \geq \frac{1}{1024 \pi} |\Omega| \alpha(\Omega)^3
	\]
\end{appendix}

\section*{Acknowledgements}

The authors thank D. Krej{\v{c}}i{\v{r}}{\'{\i}}k and N. Raymond for valuable comments. L.M. is grateful to L. Junge for preliminary discussions on the topic and sharing the reference \cite{baur}.
The work of L.M. is supported by the European Union (via the ERC Advanced Grant MathBEC - 101095820). Views and opinions expressed are however those of the authors only and do not necessarily reflect those of the European Union or the European Research Council.

\bibliographystyle{plain}

\end{document}